\newtheorem{theorem}{Theorem}[section]
\newtheorem{corollary}{Corollary}[theorem]
\newtheorem{definition}[theorem]{Definition}
\newtheorem{example}[theorem]{Example}
\newtheorem{lemma}[theorem]{Lemma}
\newtheorem{proposition}[theorem]{Proposition}
\newtheorem{remark}[theorem]{Remark}
\newenvironment{proof}[1][Proof]{\textbf{#1.} }
\begin{document}

\baselineskip=16pt
\title{A non commutative generalization BL-rings}

\author{{Surdive Atamewoue Tsafack$^{1,2}$, \ \ Arnaud Fobasso Tchinda$^{2}$,\ \ Yuming Feng$^{1,3,}$\thanks{Corresponding author, Email: yumingfeng25928@163.com, ymfeng@sanxiau.edu.cn},\ \ Selestin Ndjeya$^{4}$} \\
{\small $1.$   Key Laboratory of Intelligent Information Processing and Control, }\\
{\small Chongqing Three Gorges University, Wanzhou, Chongqing, 404100, China}\\
{\small $2.$ Department of Mathematics, University of Yaounde 1, Cameroon}\\
{\small $3.$ Chongqing Engineering Research Center of Internet of Things and Intelligent Control Technology ,}\\
{\small  Chongqing Three Gorges University, Wanzhou, Chongqing, 404100, China}\\
{\small $4.$ Department of Mathematics, Higher Teacher Training College, University of Yaounde 1}\\
{\small Cameroun}\\
{\small surdive@yahoo.fr, fobass1989@gmail.com, ndjeyas@yahoo.fr}
}
\date{}
\maketitle
\begin{abstract} The purpose of this work is to extend the study of the commutative rings whose lattice of ideals can be a structure of BL-algebra as carry out by  Heubo et al in 2018, to non commutative rings appointed in the work as pseudo BL-rings. We study and characterize rings whose ideals form a pseudo BL-algebra, we describe them in terms of their subdirectly irreductible factors. We obtain that these are (up to isomorphism) to a subring of a direct sums of unitary special primary
rings and discrete valuation ring.\\

\par {\bf Keywords:} BL-rings, pseudo-BL algebras, pseudo BL-ring, BL-algebras, multiplication ring. \\

\par {\bf AMS Mathematics Subject Classification(2010):} 03B50, 06D99.
\end{abstract}
\section{Introduction}It is well known that the algebraic study of classical logic proceeds via Boolean algebras \cite{BD09}. For the infinite valued logic of {\L}ukasiewicz, the algebraic analogues of Boolean algebras are the MV-algebras. It is also well known that Boolean algebras can be subsumed within the theory of rings. From their inception there was the question of whether or not the same is true of MV-algebras. In 2009 Belluce and Di Nola wrote an article on commutative rings whose ideals form an MV-algebra \cite{BD09}, in that work they introduced and gave some properties of a class of commutative rings whose lattice of ideals form an MV-algebra, which they called {\L}ukasiewicz rings. An important non-commutative generalization of MV-algebra, known as pseudo MV-algebra was introduced by Georgescu and Iorgulescu \cite{GI99}. The natural question that arises is what happens if one drops the commutativity assumption on {\L}ukasiewicz rings. The answer of that question has been the goal of Kadji, Lele and Nganou in their work {\it A non-commutative generalization of {\L}ukasiewicz rings} \cite{KLN16} where they study and characterize all rings whose ideals form a pseudo MV-algebra, which they called generalized {\L}ukasiewicz rings, this was in 2016. Since the class of BL-algebras contains the MV-algebras, from then in 2018, Heubo et al. \cite{HLNN18} initiated the study of commutative rings whose lattice of ideals can be equipped with a structure of BL-algebra. Recall that BL-algebras where introduced by Peter H\'{a}jek in 1998 as algebraic structures for his Basic Logic to formalize many-valued logics (in short MV logics) induced by continuous $t$-norms on the real unit interval [0,1] and the were generalized to pseudo BL-algebras by Di Nola, Georgescu and Iorgulescu \cite{DGI02}.\\
\indent In the idea to continuous the investigation of classes of rings for which the residuated lattice $A(R)$ of their ideals is an algebra of a well-known subvariety of residuated lattices, in this work we generalize the notion of commutative BL-ring to non commutative case, whom is call in the present pseudo BL-ring.\\
\indent The paper is organized as follows. In section 2, we introduce and recall some definitions and preliminaries results that will be useful in the paper. In section 3, we introduce and study the main properties of pseudo BL-ring, also their connections with some class of non commutative rings. In section 4,  we prove that this class is closed under finite direct products, arbitrary direct sums and homomorphic images. Furthermore, a description of subdirectly irreducible pseudo BL-rings is obtain.

\section{General Preliminaries}
We recall here definitions and properties that are useful throughout the article.
\begin{definition}(\cite{H98})
A BL-algebra is a structure $(A, \vee, \wedge,\odot, 0, 1)$ such that

(1) $(A, \vee, \wedge, 0, 1)$ is a bounded lattice,

(2) $(A, \odot, 1)$ is an abelian monoid, that is, $\odot$ is commutative, associative and $x \odot 1 = 1 \odot x = x$,

(3) the following conditions hold for all $x, y, z \in A$,

\indent (BL-1) $x \odot y \leq z$ iff $x \leq y \rightarrow z$ (residuation),

\indent (BL-2) $x \wedge y = x \odot (x \rightarrow y)$ (divisibility),

\indent (BL-3) $(x \rightarrow y) \vee (y \rightarrow x) = 1$ (prelinearity).
\end{definition}

An MV-algebra is a BL-algebra $A$ satisfying the double negation law: $x^{\ast\ast}=x$, for all $x\in A$, where $x^{\ast}=x\rightarrow0$.

Given any BL-algebra $A$, $MV(A):=\{x^{\ast}~|~x\in A\}$ is an MV-algebra, and is indeed the largest BL-subalgebra of $A$ satisfying the double negation. This MV-algebra is called the MV-center of $A$.\\
We recall that a BL-algebra $A$ is an MV-algebra iff $(x^{\ast})^{\ast}= x$ for any $x \in A$.
\begin{definition}(\cite{GI99})
A pseudo MV-algebra is a structure $(A, \odot, \oplus, (^{\ast}), (^{-}), 0, 1)$ of type (2, 2, 1, 1, 0, 0), such that the following axioms are satisfied for all $x, y, z \in A$,

(1) $x\odot(y\odot z)=(x\odot y)\odot z$,

(2) $x\odot1=1\odot x=x$,

(3) $x\odot0=0\odot x=0$,

(4) $0^{\ast}=1$, $0^{-}=1$,

(5) $(x^-\odot y^-)^{\ast}=(x^{\ast}\odot y^{\ast})^-$,

(6) $x\odot(x^-\oplus y)=y\odot(y^-\oplus x)=(x\oplus y^{\ast})\odot y=(y\oplus x^{\ast})\odot x$,

(7) $x\oplus(x^{\ast}\odot y)=(x\odot y^{\ast})\oplus y$,

(8) $(x^{\ast})^-=x$, where $y\oplus x:=(x^{\ast}\odot y^{\ast})^{-}=(x^{\ast}\odot y^{\ast})^-$.
\end{definition}

\begin{definition}(\cite{DGI02}) A pseudo BL-algebra is a structure $\mathcal{A}=(A, \vee, \wedge, \odot, \rightarrow, \rightsquigarrow, 0, 1)$ of type (2, 2, 2, 2, 2, 0, 0) which satisfies the following axioms, for all $x, y, z \in A$,

(1) $(A, \vee, \wedge, 0, 1)$ is a bounded lattice,

(2) $(A, \odot, 1)$ is a monoid, that is, $\odot$ is associative and $x \odot 1= 1\odot x = x$,

(3) $x \odot y \leq z$ iff $x \leq y \rightarrow z$ iff $y \leq x\rightsquigarrow z$,

(4) $x \wedge y = (x \rightarrow y)\odot x = x \odot (x \rightsquigarrow y)$,

(v) $(x \rightarrow y) \vee (y \rightarrow x) = (x \rightsquigarrow y)\vee (y \rightsquigarrow x) = 1$.
\end{definition}

A ring $R$ is said to be generated by idempotents, if for every $x\in R$, there exists an idempotent element $e\in R$ (that is $e^2=e$) such that $e\cdot x=x\cdot e =x$. There are a residuated lattice form by the two-sided ideals of the ring $R$, this lattice is define by $$A(R):=\langle Id(R),\wedge,\vee,\otimes,\rightarrow,\rightsquigarrow,\{0\},R\rangle,$$ where
$$ I\wedge J=I\cap J,~ I\vee J=I+J,~ I\otimes J=I\cdot J, $$
$$ I\rightarrow J:=\{x\in R|~ xI\subseteq J\},~ I\rightsquigarrow J:=\{x\in R|~Ix\subseteq J\}. $$

Note that $I^{\ast}:=\{x\in R~|~ xI=\{0\}\}$ is simply the right annihilator of $I$ in $R$ and $I^{-}:=\{x\in R~|~ Ix=\{0\}\}$ is simply the left annihilator of $I$ in $R$, where $I$ is an ideal of $R$. Given a ring $R$, recall that an ideal $I$ of $R$ is called an annihilator ideal (resp. a dense ideal) if $I =J^{\ast}$ and $I=K^-$ for some ideals $J$, $K$ of $R$ (resp. $I^{\ast}=I^-=\{0\}$).

$AN^{\ast}(R):=\{I^{\ast}~|~I\in A(R)\}$ and $AN^{-}(R):=\{I^{-}~|~I\in A(R)\}$ denote the sets of left and right annihilator ideals of $R$.

$D^{\ast}(R):=\{I\in A(R)~|~I^{\ast}=\{0\}\}$ and $D^{-}(R):=\{I\in A(R)~|~I^{-}=\{0\}\}$ denote the sets of left and right dense ideals of $R$.

\begin{definition}(\cite{U78})\label{def0.1}
A ring $R$ is called a multiplication ring, if for every ideals $I$, $J$ of $R$ such that $I\subseteq J$, there exist ideals $K$, $K'$ such that $I=J\cdot K=K'\cdot J$.
\end{definition}

\section{Pseudo BL-rings}

We start this section by the introduction of the notion of pseudo BL-rings which shall be non commutative rings for which the lattices of ideals are fit out with pseudo BL-algebra structures. The main properties of these rings are examine and their we settle some connections of these rings to other known classes of rings.

\begin{definition}\label{def1.1}
A ring $R$ generated by idempotents is called a pseudo BL-ring if it satisfies,
\begin{description}
  \item[PBLR-1] $I\cap J=I\cdot(I\rightsquigarrow J)=(I\rightarrow J)\cdot I,$
  \item[PBLR-2] $(I\rightarrow J)+(J\rightarrow I)=(I\rightsquigarrow J)+(J\rightsquigarrow I)=R,$
\end{description}
for all ideals $I$, $J$ of $R$.
\end{definition}

\begin{lemma}\label{lem0}
Let $I$ and $J$ be two ideals of a ring $R$. Then

(1) $I\rightarrow J=I\rightarrow (I\cap J)$ and $I\rightsquigarrow J=I\rightsquigarrow (I\cap J)$.

(2) $(I+J)\rightarrow J=I\rightarrow J$ and $(I+J)\rightarrow I=J\rightarrow I$.
\end{lemma}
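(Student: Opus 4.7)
The plan is to unwind both assertions directly from the set-theoretic definitions $I\rightarrow J=\{x\in R:xI\subseteq J\}$ and $I\rightsquigarrow J=\{x\in R:Ix\subseteq J\}$, using only the fact that $I$ and $J$ are two-sided ideals and therefore absorb multiplication on both sides.

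For part~(1), the inclusion $I\rightarrow(I\cap J)\subseteq I\rightarrow J$ is immediate from $I\cap J\subseteq J$. For the reverse, I would pick $x\in I\rightarrow J$, so $xI\subseteq J$ by definition, and then observe that $xI\subseteq I$ automatically, because $I$ is closed under left multiplication by elements of $R$. Combining the two gives $xI\subseteq I\cap J$, i.e.\ $x\in I\rightarrow(I\cap J)$. The argument for $\rightsquigarrow$ is word-for-word symmetric, this time invoking that $I$ is a right ideal so that $Ix\subseteq I$ for every $x\in R$.

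For part~(2), the inclusion $(I+J)\rightarrow J\subseteq I\rightarrow J$ follows at once from $I\subseteq I+J$. For the reverse, take $x\in I\rightarrow J$; then $x(I+J)=xI+xJ$, where $xI\subseteq J$ by the hypothesis on $x$ and $xJ\subseteq J$ because $J$ is a left ideal. Hence $x(I+J)\subseteq J$, which says $x\in(I+J)\rightarrow J$. The identity $(I+J)\rightarrow I=J\rightarrow I$ is then obtained by interchanging the roles of $I$ and $J$ in the same argument.

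I do not expect any real obstacle here: the lemma is a routine consequence of the definitions. The only point worth double-checking is that the two-sided ideal absorption is invoked on the correct side at each step (left absorption to control $xI$ or $xJ$ in the $\rightarrow$ case, right absorption in the $\rightsquigarrow$ case). Note also that neither half of the lemma uses the axioms \textbf{PBLR-1} or \textbf{PBLR-2}, nor the hypothesis that $R$ is generated by idempotents; the conclusions hold for any ring.
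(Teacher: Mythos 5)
Your proof is correct and follows the same route the paper intends: the paper's proof is simply the remark that the identities ``are easily derived from the definitions,'' and your argument is exactly that derivation, written out with the ideal-absorption properties invoked on the correct sides. Your closing observation that neither \textbf{PBLR-1}, \textbf{PBLR-2}, nor generation by idempotents is needed is also consistent with the paper, which states the lemma for an arbitrary ring.
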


\begin{proof}
These are easily derived from the definitions and the operations involved.
\end{proof}

\begin{lemma}\label{lem1}
Let $I$ and $J$ be two ideals of a ring $R$. Then $(I\rightarrow J)\cdot I\subseteq I\cap J$ and $I\cdot(I\rightsquigarrow J)\subseteq I\cap J$.
\end{lemma}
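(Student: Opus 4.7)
The plan is to unfold the two definitions
\[
I \rightarrow J = \{x \in R \mid xI \subseteq J\}, \qquad I \rightsquigarrow J = \{x \in R \mid Ix \subseteq J\},
\]
and observe that each inclusion follows from two independent reasons: one from the defining annihilation-style property, and one from the absorption property of the ideal $I$ under left/right $R$-multiplication.

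For the first inclusion, I would take an arbitrary generator $xy$ of the product ideal $(I \rightarrow J)\cdot I$, where $x \in I \rightarrow J$ and $y \in I$, and argue (a) $xy \in J$ because $x I \subseteq J$ by definition of $I \rightarrow J$, and (b) $xy \in I$ because $I$ is a two-sided ideal of $R$ and $y \in I$, so $xy \in R\cdot I \subseteq I$. Taking finite sums preserves membership in both $I$ and $J$, hence in $I \cap J$, giving $(I \rightarrow J)\cdot I \subseteq I \cap J$.

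For the second inclusion, the argument is symmetric. For $z \in I \rightsquigarrow J$ and $w \in I$, we have $wz \in J$ because $Iz \subseteq J$ by the definition of $I \rightsquigarrow J$, and $wz \in I$ because $w \in I$ and $I$ is an ideal, so $wz \in I \cdot R \subseteq I$. Passing to finite sums yields $I\cdot (I \rightsquigarrow J) \subseteq I \cap J$.

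There is no real obstacle here: the statement is essentially a one-step consequence of the definitions combined with the absorption law for ideals. The only thing to be slightly careful about is the order of multiplication (since $R$ is not assumed commutative), which is exactly why the two separate inclusions appear: the first uses right multiplication of $I \rightarrow J$ against $I$, matching the defining condition $xI \subseteq J$, while the second uses left multiplication of $I$ against $I \rightsquigarrow J$, matching $Iz \subseteq J$.
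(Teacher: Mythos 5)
Your proof is correct and follows essentially the same route as the paper's: write an element of the product ideal as a finite sum of products $a_ib_i$, use the defining condition $xI\subseteq J$ (resp.\ $Ix\subseteq J$) to get membership in $J$, and the absorption property of the two-sided ideal $I$ to get membership in $I$. Your version is in fact slightly more explicit than the paper's about which of the two reasons yields which membership, and correctly flags the non-commutativity issue.
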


\begin{proof}
 Let $x\in(I\rightarrow J)\cdot I$, then $x=\displaystyle{\sum_{i=1}^na_ib_i}$ with $a_i\in I\rightarrow J$, $b_i\in I$ and $n\in\mathbb{N}$. When using definition of ideal and the definition of $I\rightarrow J$, we  have that $x\in I$ and $x\in J$. Therefore $x\in I\cap J$ and $(I\rightarrow J)\cdot I\subseteq I\cap J$.\\
\indent Let $y\in I\cdot(I\rightsquigarrow J)$, then $y=\displaystyle{\sum_{i=1}^na_ib_i}$ with $a_i\in I$, $b_i\in I\rightsquigarrow J$ and $n\in\mathbb{N}$. When using definition of ideal and the definition of $I\rightsquigarrow J$, we  have that $y\in I$ and $y\in J$. Therefore $y\in I\cap J$ and $I\cdot(I\rightsquigarrow J)\subseteq I\cap J$.
\end{proof}
The following proposition give some axioms equivalent to PBLR-1 and PBLR-2.
\begin{proposition}
Let $R$ be a pseudo BL-ring. For all ideals $I$, $J$, $K$ of $R$, the following hold,

\indent (1) PBLR-1 iff $I\cap J\subseteq (I\rightarrow J)\cdot I$ and $I\cap J\subseteq I\cdot (I\rightsquigarrow J)$.\\
\indent (2) PBLR-2 iff $(I\cap J)\rightarrow K=(I\rightarrow K)+(J\rightarrow K)$ iff $(I\cap J)\rightsquigarrow K=(I\rightsquigarrow K)+(J\rightsquigarrow K)$.\\
\indent (3) PBLR-2 iff $I\rightarrow(J+K)=(I\rightarrow J)+(I\rightarrow K)$ iff $I\rightsquigarrow(J+K)=(I\rightsquigarrow J)+(I\rightsquigarrow K)$.
\end{proposition}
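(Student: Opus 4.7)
For part (1) there is essentially nothing to do beyond invoking Lemma \ref{lem1}: that lemma already supplies the inclusions $(I\rightarrow J)\cdot I\subseteq I\cap J$ and $I\cdot(I\rightsquigarrow J)\subseteq I\cap J$, so the equalities demanded by PBLR-1 are logically equivalent to the reverse inclusions asserted in (1).

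For part (2), I would handle each of the two equivalences with PBLR-2 separately. Consider first PBLR-2 $\Leftrightarrow$ $(I\cap J)\rightarrow K=(I\rightarrow K)+(J\rightarrow K)$. The inclusion $\supseteq$ is immediate from monotonicity of $\rightarrow$ in its first slot. For $\subseteq$ assuming PBLR-2, take $x\in (I\cap J)\rightarrow K$ and, using that $R$ is generated by idempotents, pick $e$ with $ex=xe=x$; by PBLR-2 decompose $e=a+b$ with $a\in I\rightarrow J$ and $b\in J\rightarrow I$, so $x=xa+xb$. The key observation is that $aI\subseteq J$ (from $a\in I\rightarrow J$) combined with $aI\subseteq I$ (since $I$ is a two-sided ideal) gives $aI\subseteq I\cap J$; hence $(xa)I=x(aI)\subseteq x(I\cap J)\subseteq K$, so $xa\in I\rightarrow K$, and symmetrically $xb\in J\rightarrow K$. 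Conversely, assuming the identity and specialising $K:=I\cap J$, the left side becomes $(I\cap J)\rightarrow(I\cap J)=R$ while the right side simplifies via Lemma \ref{lem0}(1) to $(I\rightarrow J)+(J\rightarrow I)$, which recovers PBLR-2. The equivalence involving $\rightsquigarrow$ is the left--right dual: write $x=ex=ax+bx$ and use $Ia\subseteq I\cap J$.

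For part (3) the architecture is parallel, but the idempotent decomposition is now applied to the pair $(J,K)$ rather than $(I,J)$. Assuming PBLR-2, given $x\in I\rightarrow(J+K)$ pick $e$ idempotent with $ex=xe=x$ and decompose $e=c+d$ with $c\in K\rightarrow J$ and $d\in J\rightarrow K$. Then $x=cx+dx$, and
\[
(cx)I=c(xI)\subseteq c(J+K)=cJ+cK\subseteq J+J=J
\]
since $cJ\subseteq J$ ($J$ is an ideal) and $cK\subseteq J$ (from $c\in K\rightarrow J$); thus $cx\in I\rightarrow J$, and symmetrically $dx\in I\rightarrow K$. Conversely, assuming the identity, apply it with $I$ replaced by $I+J$ and $(J,K)$ replaced by $(J,I)$: the left side is $(I+J)\rightarrow(J+I)=R$, and by Lemma \ref{lem0}(2) the right side collapses to $(I\rightarrow J)+(J\rightarrow I)$, recovering PBLR-2. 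The $\rightsquigarrow$ version is again the mirror argument.

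The only real obstacle is bookkeeping forced by the loss of commutativity: one must decide, in each of the four implications, whether the idempotent is multiplied on the left or on the right of $x$, and whether PBLR-2 is invoked on the pair $(I,J)$ or on $(J,K)$, so that the resulting products land in the target ideal. The auxiliary inclusions $aI\subseteq I\cap J$ and $cJ+cK\subseteq J$ that power the arguments rest crucially on $I$, $J$, $K$ being \emph{two-sided} ideals, absorbing multiplication from whichever side is needed.
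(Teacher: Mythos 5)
Your proof is correct and follows essentially the same route as the paper: part (1) reduces to Lemma \ref{lem1}, while parts (2) and (3) use the idempotent decomposition of $e$ via PBLR-2 for the forward inclusions, monotonicity for the reverse ones, and the specializations $K:=I\cap J$ (resp.\ $I:=J+K$) together with Lemma \ref{lem0} for the converses. In fact your bookkeeping is more careful than the paper's: you correctly place the idempotent summands in $I\rightarrow J$ and $J\rightarrow I$ (the paper writes $i\in I$, $j\in J$), you supply the key inclusions $aI\subseteq I\cap J$ and $cJ+cK\subseteq J$ that the paper omits, and you multiply on the correct side in part (3), where the paper's right-multiplication $y\cdot j$ would not land in $I\rightarrow J$.
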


\begin{proof}
(1) By using Lemma \ref{lem1}.

(2) Assume that $(I\cap J)\rightarrow K=(I\rightarrow K)+(J\rightarrow K)$.
Let's take $K=I\cap J$, then $(I\cap J)\rightarrow (I\cap J)=(I\rightarrow (I\cap J))+(J\rightarrow (I\cap J))$. Since $(I\cap J)\rightarrow (I\cap J)=R$, $I\rightarrow (I\cap J)=I\rightarrow J$ and $J\rightarrow (I\cap J)=J\rightarrow I$ by Lemma \ref{lem0}, then $R=(I\rightarrow J)+(J\rightarrow I)$.\\
\indent Assume now that $R=(I\rightarrow J)+(J\rightarrow I)$.

Let $x\in (I\cap J)\rightarrow K$, then $x$ can be write as $x=x\cdot i+x\cdot j$, with $i\in I$, $j\in J$, and $i+j=1$ (1 is an idempotent). Thus, $x\cdot j\in I\rightarrow K$ and $x\cdot i\in J\rightarrow K$. Therefore $(I\cap J)\rightarrow K\subseteq(I\rightarrow K)+(J\rightarrow K)$.

Let $z\in(I\rightarrow K)+(J\rightarrow K)$, then $z=x+y$ with $x\in I\rightarrow K$ and $y\in J\rightarrow K$, that is $xI\subseteq K$ and $yJ\subseteq K$.
Thus $x(I\cap J)\subseteq K$ and $y(I\cap J)\subseteq K$, which allow to obtain $z(I\cap J)=(x+y)(I\cap J)\subseteq K$.

Therefore $(I\rightarrow K)+(J\rightarrow K)\subseteq(I\cap J)\rightarrow K$.

 Using the same previous method with $"\rightsquigarrow"$ it is straightforward that PBLR-2 iff $(I\cap J)\rightsquigarrow K=(I\rightsquigarrow K)+(J\rightsquigarrow K)$.

(3) Assume that PBLR-2 is true, then for ideals $J$ and $K$ of $R$, we have $J\rightarrow K+K\rightarrow J=R$. Let $y\in I\rightarrow (J+K)$, then $y\in R$. This means that $y$ can be write as $y=y\cdot j+y\dot k$, with $j\in J$ and $k\in K$, such that $j+k=1$ (1 is an idempotent). Since $y\cdot j\in I\rightarrow J$ and $y\cdot k\in I\rightarrow K$, then $I\rightarrow(J+K)\subseteq(I\rightarrow J)+(I\rightarrow K)$.

Let $x\in(I\rightarrow J)+(I\rightarrow K)$, then $x=a+b$ with $a\in I\rightarrow J$ and $b\in I\rightarrow K$. Thus $aI\subseteq J$ and $bI\subseteq K$, so $aI+bI\subseteq J+K$. Therefore $xI\subseteq J+K$, and $I\rightarrow(J+K)\supseteq (I\rightarrow J)+(I\rightarrow K)$.

 Conversely, assume now that $I\rightarrow(J+K)=(I\rightarrow J)+(I\rightarrow K)$. Set $I=J+K$, then by Lemma \ref{lem0}, we have $J\rightarrow K+K\rightarrow J=R$ for all ideals $J,K$ of $R$. Thus PBLR-2 holds.

\indent Using the same previous method with $"\rightsquigarrow"$ it is straightforward that PBLR-2 iff $I\rightsquigarrow(J+K)=(I\rightsquigarrow J)+(I\rightsquigarrow K)$.
\end{proof}

\begin{remark}
Let $R$ be a commutative ring. If $I\rightarrow J=I\rightsquigarrow J$, then $R$ is a BL-ring.
\end{remark}

By Definitions \ref{def1.1} and \ref{def0.1}, we have the following results that give a characterization of the pseudo BL-rings.

\begin{proposition}\label{prop1}
Let $R$ be a ring. $R$ satisfies $PBLR-1$ if and only if it is a multiplication ring.
\end{proposition}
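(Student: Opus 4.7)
The plan is to prove the two directions separately, using Lemma \ref{lem1} as the backbone of the nontrivial inclusions.

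For the forward direction (PBLR-1 $\Rightarrow$ multiplication ring), I would start from any pair of ideals $I \subseteq J$ and apply PBLR-1 with the roles of $I$ and $J$ swapped. Since $J \cap I = I$ in this case, PBLR-1 immediately yields
\[
I = J \cdot (J \rightsquigarrow I) = (J \rightarrow I) \cdot J,
\]
so taking $K := J \rightsquigarrow I$ and $K' := J \rightarrow I$ gives the ideals witnessing that $R$ is a multiplication ring in the sense of Definition \ref{def0.1}.

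For the backward direction (multiplication ring $\Rightarrow$ PBLR-1), Lemma \ref{lem1} already provides the inclusions $(I \rightarrow J) \cdot I \subseteq I \cap J$ and $I \cdot (I \rightsquigarrow J) \subseteq I \cap J$, so only the reverse inclusions remain. Since $I \cap J \subseteq I$, the multiplication ring hypothesis gives ideals $K, K'$ of $R$ with
\[
I \cap J = I \cdot K = K' \cdot I.
\]
From $I \cdot K \subseteq J$ the very definition of $I \rightsquigarrow J$ forces $K \subseteq I \rightsquigarrow J$, hence $I \cap J = I \cdot K \subseteq I \cdot (I \rightsquigarrow J)$; analogously from $K' \cdot I \subseteq J$ one has $K' \subseteq I \rightarrow J$, giving $I \cap J \subseteq (I \rightarrow J) \cdot I$. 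Combining with Lemma \ref{lem1} yields PBLR-1.

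Honestly I do not expect a main obstacle: the whole proof is a direct unfolding of definitions plus one invocation of Lemma \ref{lem1}. The only point deserving care is keeping the two sides straight, i.e.\ remembering that $I \rightsquigarrow J$ is defined by \emph{left} multiplication ($Ix \subseteq J$) while $I \rightarrow J$ is defined by \emph{right} multiplication ($xI \subseteq J$), so that the factorization $I \cap J = I \cdot K$ must be matched with $\rightsquigarrow$ and $I \cap J = K' \cdot I$ with $\rightarrow$.
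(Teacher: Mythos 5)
Your proof is correct and follows essentially the same route as the paper: both directions are a direct unfolding of Definition \ref{def0.1} and the residuation operations, with Lemma \ref{lem1} supplying the easy inclusions. The only (cosmetic) difference is that in the backward direction you factor $I\cap J$ through $I$ rather than through $J$, which has the small advantage of producing the inclusions exactly in the form in which PBLR-1 is stated, and your explicit care about matching $I\cdot K$ with $\rightsquigarrow$ and $K'\cdot I$ with $\rightarrow$ is well placed.
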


\begin{proof}
Let $R$ be a ring.
Assume that $R$ satisfies $PBLR-1$, then $I\cap J=I\cdot(I\rightsquigarrow J)=(I\rightarrow J)\cdot I$. Let $I$ and $J$ be two ideals of $R$ such that $I\subseteq J$. We found two ideals $K$ and $K'$ of $R$, such that $I=J\cdot K=K'\cdot J$.

By $PBLR-1$, $I=I\cap J=J\cdot(J\rightsquigarrow I)=(J\rightarrow I)\cdot J$. Then took $K=J\rightsquigarrow I$ and $K'=J\rightarrow I$.

Conversely, assume that $R$ is a multiplication ring and let $I$, $J$ be ideals of $R$. Since $I\cap J\subseteq J$, there exist two ideals $K$ and $K'$ of $R$ such that $I\cap J=J\cdot K=K'\cdot J$. Hence $J\cdot K\subseteq I$, $K'\cdot J\subseteq I$ and it follows that $K\subseteq J\rightsquigarrow I$ and $K'\subseteq J\rightarrow I$.

Thus $I\cap J\subseteq J\cdot(J\rightsquigarrow I)$ and $I\cap J\subseteq (J\rightarrow I)\cdot I$. Therefore, $PBLR-1$ holds.
\end{proof}

\begin{proposition}\label{prop2}
Let $R$ be a ring. Then the following are equivalent,

\indent (1) $R$ is a pseudo BL-ring.

\indent (2) $A(R)$ is a pseudo BL-algebra.
\end{proposition}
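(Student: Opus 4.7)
The plan is to verify that, once the operations on $A(R)$ are spelled out, the five axioms of a pseudo BL-algebra applied to $A(R) = \langle Id(R), \cap, +, \cdot, \rightarrow, \rightsquigarrow, \{0\}, R \rangle$ translate exactly into the definition of a pseudo BL-ring, so that the equivalence is essentially a matching of axioms once three ``background'' facts about the ideal lattice have been secured.

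First I would dispose of the three pseudo BL-algebra axioms that are independent of PBLR-1 and PBLR-2. That $(A(R), \cap, +, \{0\}, R)$ is a bounded lattice is the classical fact about the two-sided ideal lattice of a ring. That $(A(R), \cdot, R)$ is a monoid requires associativity of ideal multiplication (immediate) together with the identity property $I \cdot R = R \cdot I = I$; this is the one place where the hypothesis that $R$ be generated by idempotents is forced into play, since for each $x \in I$ one picks an idempotent $e \in R$ with $ex = xe = x$, so that $x \in (I \cdot R) \cap (R \cdot I)$. The residuation law is built into the very definitions of $\rightarrow$ and $\rightsquigarrow$: $I \cdot J \subseteq K$ reads verbatim as $I \subseteq J \rightarrow K$ and as $J \subseteq I \rightsquigarrow K$.

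Next I would observe that with these three background axioms in hand, the remaining pseudo BL-algebra axioms are \emph{literally} PBLR-1 and PBLR-2. Divisibility, $I \cap J = (I \rightarrow J) \cdot I = I \cdot (I \rightsquigarrow J)$, is PBLR-1; prelinearity, $(I \rightarrow J) + (J \rightarrow I) = (I \rightsquigarrow J) + (J \rightsquigarrow I) = R$, is PBLR-2. Both directions of the equivalence then follow immediately: a pseudo BL-ring satisfies all five axioms, so $A(R)$ is a pseudo BL-algebra; and conversely, any pseudo BL-algebra structure on $A(R)$ delivers PBLR-1 and PBLR-2 without further work.

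The only step that demands genuine argument rather than unpacking of definitions is the monoid identity $I \cdot R = R \cdot I = I$, which is exactly why the definition of a pseudo BL-ring requires $R$ to be generated by idempotents. All remaining verifications are tautological given the way the arrows and the product on $Id(R)$ have been set up.
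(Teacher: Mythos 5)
Your forward direction matches the paper's argument and is in fact more explicit than the paper about the one non-trivial point, namely that the monoid identity $I\cdot R=R\cdot I=I$ in $A(R)$ rests on $R$ being generated by idempotents. The gap is in the converse. Definition \ref{def1.1} declares a pseudo BL-ring to be a ring \emph{generated by idempotents} satisfying PBLR-1 and PBLR-2. Reading divisibility and prelinearity off the pseudo BL-algebra structure of $A(R)$ therefore gives you only the two displayed axioms; you still owe an argument that $R$ is generated by idempotents, and this does not come ``without further work.'' The monoid law $I\cdot R=R\cdot I=I$, applied to the principal two-sided ideal $(x)$, only yields $x\in (x)\cdot R\cap R\cdot(x)$, i.e.\ roughly $x\in xR+RxR$ and $x\in Rx+RxR$; that is strictly weaker than the existence of a single idempotent $e$ with $ex=xe=x$.

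The paper closes this hole by a detour you omit: from divisibility (which is PBLR-1) it deduces via Proposition \ref{prop1} that $R$ is a multiplication ring, and then invokes the fact that multiplication rings are generated by idempotents. Some step of this kind is needed in your converse; as written, it proves only that $R$ satisfies PBLR-1 and PBLR-2, not that $R$ is a pseudo BL-ring in the sense of Definition \ref{def1.1}.
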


\begin{proof}
 Assume that $R$ is a pseudo BL-ring. Then PBLR-1 and PBLR-2 hold.

By the definition of $A(R)$, $(A(R);\wedge,\vee,\{0\},R)$ is a bounded lattice and $(A(R);\otimes,R)$ is an associative monoid.

Let $I$, $J$ and $K$ be ideals of $R$, then $I\otimes J\subseteq K$ $\Leftrightarrow$ $I\subseteq J\rightarrow K$ $\Leftrightarrow$ $J\subseteq I\rightsquigarrow K$. Since PBLR-1 and PBLR-2 hold, then prelinearity and divisibility hold. Thus $A(R)$ is a pseudo BL-algebra.

 Conversely assume that $A(R)$ is a pseudo BL-algebra.

Since for all $x,y$ in $R$, $x\wedge y=(x\rightarrow y)\cdot x=x\cdot (x\rightsquigarrow y)$ and $(x\rightarrow y)\vee (y\rightarrow x)=(x\rightsquigarrow y)\vee(y\rightsquigarrow x)=1$, then PBLR-1 and PBLR-2 hold. Therefore by Proposition \ref{prop1}, $R$ is a multiplication ring, so generated by idempotents.

Thus $R$ is a pseudo BL-ring.
\end{proof}

We know that if $R$ is a ring and $M_n(R)$ the ring of $n\times n$ matrices over $R$, then any ideal $\mathcal{I}$ of $M_n(R)$ has the form $M_n(I)$ for a uniquely determined ideal $I$ of $R$ \cite{L91}.

\begin{lemma}\label{lem2}
If $R$ is a multiplicative ring, then $M_n(R)$ is a multiplicative ring.
\end{lemma}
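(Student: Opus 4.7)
The plan is to reduce the multiplicativity of $M_n(R)$ to that of $R$ by exploiting the stated bijection between ideals of $M_n(R)$ and ideals of $R$, namely that every ideal of $M_n(R)$ is of the form $M_n(I)$ for a unique ideal $I$ of $R$.

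First I would take arbitrary ideals $\mathcal{I} \subseteq \mathcal{J}$ of $M_n(R)$ and write them as $\mathcal{I} = M_n(I)$ and $\mathcal{J} = M_n(J)$. The inclusion $\mathcal{I} \subseteq \mathcal{J}$ forces $I \subseteq J$ (for example by looking at the $(1,1)$-entries of matrices of the form $aE_{11}$ with $a \in I$). Then, since $R$ is a multiplication ring, I can choose ideals $K, K' \subseteq R$ with $I = J \cdot K = K' \cdot J$, and my candidate ideals of $M_n(R)$ will be $\mathcal{K} = M_n(K)$ and $\mathcal{K}' = M_n(K')$.

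The crucial step is then the identity $M_n(J) \cdot M_n(K) = M_n(J \cdot K)$ (and symmetrically on the other side). The inclusion $M_n(J) \cdot M_n(K) \subseteq M_n(J \cdot K)$ is immediate from entrywise inspection: each entry of a product $AB$ with $A \in M_n(J)$ and $B \in M_n(K)$ is a sum $\sum_r A_{pr} B_{rq} \in J \cdot K$. For the reverse inclusion, I would use the matrix units $E_{pq}$: given $X \in M_n(J \cdot K)$, write each entry $x_{pq} = \sum_l a^{(pq)}_l b^{(pq)}_l$ with $a^{(pq)}_l \in J$ and $b^{(pq)}_l \in K$, and then observe
\[
x_{pq} E_{pq} = \sum_l \bigl(a^{(pq)}_l E_{pp}\bigr)\bigl(b^{(pq)}_l E_{pq}\bigr),
\]
where each left factor lies in $M_n(J)$ and each right factor in $M_n(K)$; summing over $(p,q)$ expresses $X$ as an element of $M_n(J) \cdot M_n(K)$.

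Combining these facts gives $\mathcal{J} \cdot \mathcal{K} = M_n(J \cdot K) = M_n(I) = \mathcal{I}$, and similarly $\mathcal{K}' \cdot \mathcal{J} = \mathcal{I}$, establishing multiplicativity of $M_n(R)$. There is no serious obstacle; the only mildly delicate point is the reverse inclusion in the product identity, for which the matrix-unit decomposition above is the standard trick.
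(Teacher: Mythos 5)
Your proposal is correct and follows essentially the same route as the paper: reduce to ideals of $R$ via the correspondence $\mathcal{I}=M_n(I)$, apply multiplicativity of $R$, and transfer the factorization back through $M_n(J\cdot K)=M_n(J)\cdot M_n(K)$. The only difference is that you actually justify that last identity with the matrix-unit decomposition, whereas the paper simply asserts it; your version is the more complete one.
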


\begin{proof}
Let $M_n(I)$ and $M_n(J)$ two ideals of $M_n(R)$ such that $M_n(I)\subseteq M_n(J)$, with $I,J$ two ideals of $R$.

It is easily verify that $M_n(I)\subseteq M_n(J)$ if and only if $I\subseteq J$. Since $R$ is a multiplicative ring, then there exist $K$ and $K'$ two ideals of $R$, such that $I=J\cdot K=K'\cdot J$ which allow to say that $M_n(I)=M_n(J)\cdot M_n(K)=M_n(K')\cdot M_n(J)$. Thus $M_n(R)$ is a multiplicative ring.
\end{proof}

\begin{lemma}\label{lem3}
If $R$ is a multiplicative ring, then $M_n(R)$ satisfies PBLR-1.
\end{lemma}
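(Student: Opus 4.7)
The plan is to obtain this as an immediate corollary of the two results we have just established, namely Proposition \ref{prop1} and Lemma \ref{lem2}. Recall that the former characterizes the PBLR-1 axiom as being equivalent to the multiplication-ring property, and the latter lifts the multiplication-ring property from $R$ to $M_n(R)$. So once both are in hand, the chain of implications is forced and there is essentially no computation to perform.

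More precisely, I would proceed in two short steps. First, I would invoke Lemma \ref{lem2}: since $R$ is a multiplication ring, $M_n(R)$ is also a multiplication ring. This step rests on the correspondence between two-sided ideals of $M_n(R)$ and two-sided ideals of $R$ (every ideal of $M_n(R)$ is of the form $M_n(I)$ for a unique ideal $I$ of $R$), which behaves well with respect to products and inclusions; that correspondence was already used in the proof of Lemma \ref{lem2}. Second, I would apply Proposition \ref{prop1} directly to the ring $M_n(R)$: any multiplication ring satisfies PBLR-1, so from $M_n(R)$ being a multiplication ring we conclude that PBLR-1 holds in $M_n(R)$, i.e.\ $\mathcal{I}\cap\mathcal{J}=\mathcal{I}\cdot(\mathcal{I}\rightsquigarrow \mathcal{J})=(\mathcal{I}\rightarrow \mathcal{J})\cdot\mathcal{I}$ for all ideals $\mathcal{I}=M_n(I)$ and $\mathcal{J}=M_n(J)$ of $M_n(R)$.

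There is no real obstacle here; the only point that might merit a line of justification is the implicit remark that Proposition \ref{prop1} is a statement about arbitrary rings and so applies verbatim to $M_n(R)$, without needing to re-verify anything in the matrix setting. If one wanted to be fully explicit, one could unwind the equalities in $M_n(R)$ using the dictionary $M_n(I)\cdot M_n(J)=M_n(I\cdot J)$, $M_n(I)\cap M_n(J)=M_n(I\cap J)$, and the analogous identifications $M_n(I)\rightarrow M_n(J)=M_n(I\rightarrow J)$ and $M_n(I)\rightsquigarrow M_n(J)=M_n(I\rightsquigarrow J)$, but this bookkeeping is already implicit in the previous lemma, so I would keep the proof to the two-line combination of Lemma \ref{lem2} and Proposition \ref{prop1}.
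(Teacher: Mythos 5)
Your proposal is correct and is exactly the paper's own argument: apply Lemma \ref{lem2} to get that $M_n(R)$ is a multiplication ring, then apply Proposition \ref{prop1} to that ring to conclude PBLR-1 holds. The extra remarks about the ideal correspondence are fine but not needed beyond what Lemma \ref{lem2} already provides.
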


\begin{proof}
Let $R$ be a multiplicative ring, then by using Lemma \ref{lem2} and Proposition \ref{prop1}, we have that PBLR-1 holds for $M_n(R)$.
\end{proof}

\begin{lemma}\label{lem4}
Let $I$ and $J$ be two ideals of a ring $R$, then

(1) $M_n(I+J)=M_n(I)+M_n(J)$.

(2) $M_n(I\rightarrow J)=M_n(I)\rightarrow M_n(J)$ and $M_n(I\rightsquigarrow J)=M_n(I)\rightsquigarrow M_n(J)$.
\end{lemma}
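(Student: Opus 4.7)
The plan is to prove the two statements by purely entry-wise manipulations, using elementary matrix units as the main tool for part~(2).

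For part~(1), I would argue both inclusions directly. If $A \in M_n(I+J)$, then each entry $a_{ij}$ can be written as $b_{ij}+c_{ij}$ with $b_{ij}\in I$ and $c_{ij}\in J$; collecting the $b_{ij}$'s into one matrix and the $c_{ij}$'s into another displays $A$ as a sum of elements of $M_n(I)$ and $M_n(J)$. Conversely, any matrix in $M_n(I)+M_n(J)$ is the entry-wise sum of an $I$-valued matrix and a $J$-valued matrix, so its entries land in $I+J$. This is essentially the same remark used to identify ideals of $M_n(R)$ with ideals of $R$ (cited from \cite{L91}).

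For part~(2), the main work is to show $M_n(I\rightarrow J)=M_n(I)\rightarrow M_n(J)$; the argument for $\rightsquigarrow$ will be symmetric by flipping the order of multiplication. For the inclusion $\subseteq$, I would take $X\in M_n(I\rightarrow J)$ and any $A\in M_n(I)$, and compute the $(i,j)$-entry of $XA$ as $\sum_k x_{ik}a_{kj}$, noting that $x_{ik}\in I\rightarrow J$ and $a_{kj}\in I$ force each term (and hence the sum) to lie in $J$, so $XA\in M_n(J)$. For the reverse inclusion, this is where the main idea comes in: given $X\in M_n(I)\rightarrow M_n(J)$ with entries $x_{ij}$, and given any $a\in I$, form the matrix $E_{jk}(a)$ having $a$ in position $(j,k)$ and zeros elsewhere, which belongs to $M_n(I)$. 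Then $X\cdot E_{jk}(a)\in M_n(J)$, and its $(i,k)$-entry is exactly $x_{ij}a$. Since this holds for every $a\in I$ and every choice of $j$, every entry $x_{ij}$ satisfies $x_{ij}I\subseteq J$, i.e.\ $x_{ij}\in I\rightarrow J$, so $X\in M_n(I\rightarrow J)$.

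The statement for $\rightsquigarrow$ is obtained by the mirror argument: to capture $Ix_{ij}\subseteq J$, one multiplies $E_{kj}(a)\cdot X$ on the left instead, and reads off the $(k,i)$-entry. I do not expect any serious obstacle; the only point to be careful about is keeping the row/column indices straight when using $E_{jk}(a)$ on the left versus right, since the ring is non-commutative and $\rightarrow$ and $\rightsquigarrow$ are genuinely distinct operations. Once this bookkeeping is done, both equalities follow immediately.
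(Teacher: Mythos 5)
Your argument is correct: both inclusions in (1) are immediate entry-wise, and in (2) the matrix-unit trick $X\cdot E_{jk}(a)$ (resp.\ $E_{kj}(a)\cdot X$) correctly isolates $x_{ij}a$ (resp.\ $ax_{ji}$) to recover $x_{ij}I\subseteq J$ (resp.\ $Ix_{ij}\subseteq J$). The paper dismisses this lemma with ``straightforward from the definitions,'' so your proof is exactly the intended direct verification, merely written out in full.
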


\begin{proof}
Straightforward, just use the definition of the operations involved.
\end{proof}

\begin{example}
Let $R$ be a Notherian multiplicative ring, then $M_n(R)$ is a pseudo BL-ring.\\
Let $R$ be a discrete valuation ring or a {\L}ukasiewicz ring, then $M_n(R)$ is a pseudo BL-ring.
\end{example}

Now given a ring $R$ and a proper ideal $I$ of R, let's consider the ring $R/I$. An ideal of $R/I$ is a set $J/I=\{x/I~|~x\in J\}$, where $J$ is an ideal of $R$ and $I\subseteq J$.
We also define the left annihilator( respectively the right annihilator) of an ideal $I$ by $I^{\ast}=\{x\in R~|~ xI=\{0\}\}=\{x\rightarrow 0~|~x\in I\}$ (respectively $I^{-}=\{x\in R~|~Ix=\{0\}\}=\{x\rightsquigarrow 0~|~x\in I\}$).

\begin{lemma}\label{lem5}
Let $I$ ,$J$ and $K$ be ideals of a ring $R$, such that $I\subseteq J$ and $I\subseteq K$. Then,

  (1) $(J/I)^{\ast}=(J\rightarrow I)/I$ and $(J/I)^-=(J\rightsquigarrow I)/I$.

  (2) $(J/I)\rightarrow(K/I)=(J\rightarrow K)/I$ and $(J/I)\rightsquigarrow(K/I)=(J\rightsquigarrow K)/I$

\end{lemma}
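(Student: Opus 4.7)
The plan is to unfold the definitions of the annihilator and the two residuation operations in the quotient ring $R/I$ and translate them into statements about representatives in $R$. Recall that addition and multiplication in $R/I$ are performed representative-wise, and $x/I = 0/I$ iff $x \in I$. The only genuine care required is checking, in each case, that the ideal appearing on the right-hand side actually contains $I$, so that the quotient notation is well defined.

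For part (1), I would begin from
$$(J/I)^{\ast} = \{\, x/I \in R/I \mid (x/I)(y/I) = 0/I \text{ for all } y/I \in J/I \,\}.$$
Since $(x/I)(y/I) = (xy)/I = 0/I$ iff $xy \in I$, the condition becomes $xJ \subseteq I$, i.e.\ $x \in J \rightarrow I$. Thus the set of such $x$ is exactly $J \rightarrow I$, and $I \subseteq J \rightarrow I$ because $IJ \subseteq I$ (two-sidedness of $I$), so $(J \rightarrow I)/I$ is a legitimate ideal of $R/I$ and equals $(J/I)^{\ast}$. The formula for $(J/I)^{-}$ is obtained by the symmetric computation using $Jx$ in place of $xJ$.

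For part (2), the same unfolding gives
$$(J/I) \rightarrow (K/I) = \{\, x/I \mid (xj)/I \in K/I \text{ for every } j \in J \,\}.$$
Here the hypothesis $I \subseteq K$ matters: $(xj)/I \in K/I$ is equivalent to $xj + I \subseteq K$, which, since $I \subseteq K$, reduces to $xj \in K$. So the condition on $x$ becomes $xJ \subseteq K$, i.e.\ $x \in J \rightarrow K$. The containment $I \subseteq J \rightarrow K$ holds because $IJ \subseteq I \subseteq K$, so $(J \rightarrow K)/I$ is a well-defined ideal of $R/I$, and we obtain the first equality. The identity for $\rightsquigarrow$ is proved by the mirror-image calculation with left multiplication.

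I do not expect any real obstacle; the argument is essentially pure bookkeeping with cosets. The one place where a careless reader could slip is forgetting to verify the containments $I \subseteq J \rightarrow I$, $I \subseteq J \rightsquigarrow I$, $I \subseteq J \rightarrow K$, and $I \subseteq J \rightsquigarrow K$ that make the right-hand sides into quotients by $I$; each of these follows immediately from $IL \subseteq I$ and $LI \subseteq I$ for any ideal $L$, together with the hypothesis $I \subseteq K$ where needed.
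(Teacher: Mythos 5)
Your proposal is correct and takes essentially the same approach as the paper's proof: unwind the definitions of the annihilator and the residuation operations in $R/I$ coset by coset and reduce each condition to the corresponding one in $R$. If anything you are slightly more careful than the paper, which neither verifies the containments (e.g.\ $I\subseteq J\rightarrow K$) that make the right-hand sides well-defined quotients nor notes that $(xa)/I\in K/I$ reduces to $xa\in K$ only because $I\subseteq K$.
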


\begin{proof}
  (1) Let $x/I\in(J/I)^{\ast}$, then for all $a/I$ with $a\in J$, $xa\in I$. This means that $xJ\subseteq I$. Thus $x/I\in (J\rightarrow I)/I$.

  Therefore $(J/I)^{\ast}\subseteq(J\rightarrow I)/I$.

  Conversely let $y/I\in(J\rightarrow I)/I$, then for any $a/I\in J/I$, $ya\in I$ because, $yJ\subseteq I$ by the definition of $y/I\in (J\rightarrow I)/I$. This means that $y/I(J/I)=I$. Therefore $(J/I)^{\ast}\supseteq(J\rightarrow I)/I$.\\
   To show that $(J/I)^-=(J\rightsquigarrow I)/I$ holds, we just have to use same idea as in $(J/I)^{\ast}=(J\rightarrow I)/I$.

  (2) Let $y/I\in (J\rightarrow K)/I$, then $yJ\subseteq K$. This implies that $(xJ)/I\subseteq K/I$ which is $(y/I)\cdot(J/I)\subseteq K/I$. Thus $(J\rightarrow K)/I\subseteq (J/I)\rightarrow(K/I)$.\\
      Conversely let $x/I\in (J/I)\rightarrow(K/I)$, then for all $a\in J$ $xa\in K$, which implies that $x/I\in (J\rightarrow K)/I$. Therefore $(J/I)\rightarrow(K/I)\subseteq (J\rightarrow K)/I$.\\
      With the same idea as for $(J/I)\rightarrow(K/I)=(J\rightarrow K)/I$, we show that $(J/I)\rightsquigarrow(K/I)=(J\rightsquigarrow K)/I$.
\end{proof}

It is possible to observe if the property PBLR-2 in a ring $R$ by observing his quotient by a proper ideal. This is the main purpose of the following propositions.
\begin{proposition}\label{prop2'''}
Let $I$ and $J$ be two ideals of a ring $R$. If the ring $R$ satisfies PBLR-2, then $I\cap J=\{0\}$ implies that $I^{\ast}+J^{\ast}=R$ and $I^-+J^-=R$.
\end{proposition}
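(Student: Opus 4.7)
The plan is to reduce this directly to part (2) of the earlier proposition, which states that under PBLR-2 one has
\[
(I\cap J)\rightarrow K=(I\rightarrow K)+(J\rightarrow K) \quad\text{and}\quad (I\cap J)\rightsquigarrow K=(I\rightsquigarrow K)+(J\rightsquigarrow K)
\]
for all ideals $I,J,K$ of $R$. The idea is to specialize $K=\{0\}$.

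First I would recall from the paragraph immediately preceding Lemma~\ref{lem5} that the annihilators can be written as $I^{\ast}=I\rightarrow\{0\}$ and $I^{-}=I\rightsquigarrow\{0\}$ (this is just the definition of the residuals at the bottom element). Then, assuming $I\cap J=\{0\}$, the left-hand sides become $(I\cap J)\rightarrow\{0\}=\{0\}\rightarrow\{0\}$ and $(I\cap J)\rightsquigarrow\{0\}=\{0\}\rightsquigarrow\{0\}$, each of which equals $R$ since every element $x\in R$ trivially satisfies $x\cdot\{0\}\subseteq\{0\}$ and $\{0\}\cdot x\subseteq\{0\}$.

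Substituting $K=\{0\}$ into the two equivalent forms of PBLR-2 therefore yields
\[
R=(I\rightarrow\{0\})+(J\rightarrow\{0\})=I^{\ast}+J^{\ast}
\]
and
\[
R=(I\rightsquigarrow\{0\})+(J\rightsquigarrow\{0\})=I^{-}+J^{-},
\]
which is exactly the conclusion.

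There is essentially no obstacle here: the only thing to be careful about is the identification of $I^{\ast}$ and $I^{-}$ with residuals to $\{0\}$, which is immediate from the definitions given in Section~2. All the real content has already been absorbed into the earlier proposition giving the equivalent forms of PBLR-2, so this statement is genuinely a one-line corollary once $K=\{0\}$ is plugged in.
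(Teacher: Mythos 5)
Your proof is correct, and it is a legitimate one-line deduction; it just routes through a different intermediate fact than the paper does. You invoke part (2) of the proposition following Lemma \ref{lem1} (the distributive reformulation $(I\cap J)\rightarrow K=(I\rightarrow K)+(J\rightarrow K)$ of PBLR-2) and specialize $K=\{0\}$, using $\{0\}\rightarrow\{0\}=R$ and the identifications $I^{\ast}=I\rightarrow\{0\}$, $I^{-}=I\rightsquigarrow\{0\}$. The paper instead applies Lemma \ref{lem0}(1) directly: under the hypothesis $I\cap J=\{0\}$ one has $I\rightarrow J=I\rightarrow(I\cap J)=I\rightarrow\{0\}=I^{\ast}$ and $J\rightarrow I=J\rightarrow(I\cap J)=J^{\ast}$, so PBLR-2 in its original form $(I\rightarrow J)+(J\rightarrow I)=R$ is literally the desired conclusion (and likewise for $\rightsquigarrow$ and $(^{-})$). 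The two arguments are of equal length and both hinge on the same identification of annihilators with residuals into $\{0\}$; the paper's version is marginally more self-contained, since it leans only on the elementary Lemma \ref{lem0} rather than on the earlier equivalence, whose own proof uses a somewhat delicate idempotent-decomposition argument. Either way the result stands.
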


\begin{proof}
Assume that $I\cap J=\{0\}$.

PBLR-2 holds means that $I\rightarrow J+ J\rightarrow I=R$ and $I\rightsquigarrow J+ J\rightsquigarrow I=R$. then $I^{\ast}+J^{\ast}=R$ and $I^-+J^-=R$ by using Lemma \ref{lem0}, definitions and operations involved.
\end{proof}
We denote PBLR-3: $I\cap J=\{0\}$ implies that $I^{\ast}+J^{\ast}=R$ and $I^-+J^-=R$.
\begin{proposition}\label{prop2'}
A ring $R$ satisfies PBLR-2 if and only if every quotient of $R$ by an ideal satisfies PBLR-3.
\end{proposition}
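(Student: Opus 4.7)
The plan is to transfer PBLR-2 back and forth across the quotient map using Lemma \ref{lem5}, with the key step of choosing $K := I\cap J$ in the reverse direction. Recall that ideals of $R/K$ are precisely $I/K$ for ideals $I$ of $R$ containing $K$, and that $(I/K)\cap(J/K)=(I\cap J)/K$ is the zero ideal of $R/K$ exactly when $I\cap J = K$.

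For the forward implication, I would assume $R$ satisfies PBLR-2 and fix an arbitrary ideal $K$ of $R$. Given ideals $I/K$ and $J/K$ of $R/K$ whose intersection is $\{0\}$, the correspondence forces $I\cap J = K$. Applying PBLR-2 in $R$ to the pair $(I,J)$ yields $(I\rightarrow J)+(J\rightarrow I)=R$. By Lemma \ref{lem0}(1), $I\rightarrow J = I\rightarrow(I\cap J) = I\rightarrow K$ and, symmetrically, $J\rightarrow I = J\rightarrow K$, so $(I\rightarrow K)+(J\rightarrow K)=R$. Passing to the quotient via Lemma \ref{lem5}(1) gives $(I/K)^{\ast}+(J/K)^{\ast}=R/K$, and the parallel argument with $\rightsquigarrow$ yields $(I/K)^{-}+(J/K)^{-}=R/K$. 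Hence $R/K$ satisfies PBLR-3.

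For the converse, I would assume that every quotient of $R$ satisfies PBLR-3 and let $I,J$ be arbitrary ideals of $R$. Setting $K := I\cap J$, the ideals $I/K$ and $J/K$ of $R/K$ intersect in $\{0\}$, so PBLR-3 for $R/K$ delivers $(I/K)^{\ast}+(J/K)^{\ast}=R/K$ and $(I/K)^{-}+(J/K)^{-}=R/K$. Lemma \ref{lem5}(1) lifts these to $(I\rightarrow K)+(J\rightarrow K)=R$ and $(I\rightsquigarrow K)+(J\rightsquigarrow K)=R$. Since $K=I\cap J$, Lemma \ref{lem0}(1) rewrites $I\rightarrow K$ as $I\rightarrow J$, $J\rightarrow K$ as $J\rightarrow I$, and similarly for $\rightsquigarrow$, so the identities become $(I\rightarrow J)+(J\rightarrow I) = (I\rightsquigarrow J)+(J\rightsquigarrow I) = R$, which is PBLR-2.

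I do not foresee a genuine obstacle; the only real content lies in the reverse direction, where the choice $K := I\cap J$ is precisely what converts the quotient hypothesis of PBLR-3 back into the $R$-level identity of PBLR-2 via Lemma \ref{lem0}. Everything else is a direct translation through the ideal-correspondence for quotients and Lemma \ref{lem5}.
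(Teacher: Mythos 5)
Your proof is correct and follows essentially the same route as the paper's: both directions hinge on translating annihilators in the quotient into arrow ideals of $R$ via Lemma \ref{lem5}, rewriting $I\rightarrow(I\cap J)$ as $I\rightarrow J$ via Lemma \ref{lem0}, and, for the converse, specializing to the quotient by $K=I\cap J$. No substantive difference from the paper's argument.
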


\begin{proof}
Let $I$ be an ideal of the ring $R$ which satisfies PBLR-2. Let $J, K$ two ideals of $R$ such that $I\subseteq J$, $J\subseteq K$ and $(J/I)\cap(K/I)=I$. It is clear that $J\cap K=I$. Since $(J/I)^-+(K/I)^-=(J\rightarrow I)/I+(K\rightarrow I)/I=(J\rightarrow(J\cap K))/I+(K\rightarrow(J\cap K))/I=((J\rightarrow K)+(K\rightarrow J))/I=R/I$ (by using Lemmas \ref{lem0}, \ref{lem5}). In the same way, $(J/I)^{\ast}+(K/I)^{\ast}=R/I$. Thus, PBLR-3 holds for $R/I$.\\
\indent Conversely, suppose that in every factor of $R$, PBLR-3 holds. Let $I$, $J$ be ideals of $R$, then $R/(I\cap J)$ satisfies PBLR-3. Clearly  $(I/(I\cap J))\cap(J/(I\cap J))=(I\cap J)$, then $(I/(I\cap J))^-+(J/(I\cap J))^-=R/(I\cap J)$ and $(I/(I\cap J))^{\ast}+(J/(I\cap J))^{\ast}=R/(I\cap J)$ (by using Lemma \ref{lem5}). That is $(I\rightsquigarrow J)/(I\cap J)+(J\rightsquigarrow I)/(I\cap J)=(I\rightarrow J)/(I\cap J)+(J\rightarrow I)/(I\cap J)=R/(I\cap J)$. Thus, $((I\rightarrow J)+(J\rightarrow I))/(I\cap J)=((I\rightsquigarrow J)+(J\rightsquigarrow I))/(I\cap J)=R/(I\cap J)$ that implies $(I\rightarrow J)+(J\rightarrow I)=R$ and $(I\rightsquigarrow J)+(J\rightsquigarrow I)=R$. Finally $R$ satisfies PBLR-2, which conclude the proof.
\end{proof}

\begin{proposition}\label{prop2''}
Every quotient of a multiplication rings is a multiplication ring.
\end{proposition}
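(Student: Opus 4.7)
The plan is to use the ideal correspondence theorem to reduce the problem to applying the multiplication property in $R$ itself, then push the resulting ideals back down to $R/P$.

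First I would fix an ideal $P$ of the multiplication ring $R$ and take two ideals of $R/P$ with one contained in the other. By the standard correspondence between ideals of $R/P$ and ideals of $R$ containing $P$, I can write them as $I/P$ and $J/P$ with $P \subseteq I \subseteq J$ and $I, J$ ideals of $R$. Since $R$ is a multiplication ring (Definition \ref{def0.1}), there exist ideals $K, K'$ of $R$ with $I = J\cdot K = K'\cdot J$.

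Next I would propose $(K+P)/P$ and $(K'+P)/P$ as the witnessing ideals in $R/P$. The key calculation is
\[
(J/P)\cdot ((K+P)/P) = (J\cdot K + J\cdot P)/P \subseteq (JK + P)/P = (I+P)/P = I/P,
\]
where the final equality uses $P \subseteq I$. Conversely $I = JK \subseteq J(K+P)$, so the reverse inclusion gives equality $(J/P)\cdot ((K+P)/P) = I/P$. The analogous computation $((K'+P)/P)\cdot (J/P) = I/P$ is handled identically with $K'$ replaced for $K$ on the appropriate side. Thus $\bar{I} = \bar{J}\cdot \bar{K} = \bar{K'}\cdot \bar{J}$ in $R/P$, establishing the multiplication-ring property for the quotient.

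There is no genuine obstacle here; the only point requiring care is keeping the two-sided nature of the problem straight, so that $K$ is used on the right of $J$ and $K'$ on the left. The rest is the elementary identity $(A\cdot B + P)/P = (A/P)\cdot (B/P)$ for ideals $A, B \supseteq P$, together with the containment $P \subseteq I$ which absorbs the residual $J\cdot P$ term.
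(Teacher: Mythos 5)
Your proposal is correct and follows essentially the same route as the paper: lift the two ideals of $R/P$ to ideals $I\subseteq J$ of $R$ containing $P$, apply the multiplication property in $R$ to get $I=J\cdot K=K'\cdot J$, and push the witnesses back down to the quotient. The only difference is that you pass to $(K+P)/P$ and $(K'+P)/P$ to ensure the witnesses correspond to ideals containing $P$, a point the paper glosses over by writing $T/K$ directly; your version is slightly more careful but the argument is the same.
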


\begin{proof}
Let $R$ be a multiplication ring and $K$ an ideal of $R$. Let $I/K$ and $J/K$ be two ideals of $R/K$ such that $I/K\subseteq J/K$, then $I\subseteq J$. Hence, there exist ideals $T$ and $T'$ of $R$ (because $R$ is a multiplication ring) such that $I=J\cdot T=T'\cdot J$. Thus $I/K=(J\cdot T)/K=(T'\cdot J)/K$ and $I/K=J/K\cdot T/K=T'/K\cdot J/K$. Therefore, $R/K$ is a multiplication ring.
\end{proof}

\begin{proposition}\label{prop3}
Let $R$ be a multiplication ring such that the quotient ring satisfies PBLR-3, then $R$ satisfies PBLR-3.
\end{proposition}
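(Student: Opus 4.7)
The hypothesis ``the quotient ring satisfies PBLR-3'' is most naturally read, in light of Proposition \ref{prop2'}, as asserting that every factor ring $R/K$ satisfies PBLR-3 (which already contains $R = R/\{0\}$ as a special case). Under that reading, the plan is a short chain of the two immediately preceding propositions, and no new structural work is needed beyond what has already been established.

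First I would invoke Proposition \ref{prop2'} in its ``(every quotient has PBLR-3) $\Rightarrow$ PBLR-2'' direction: the hypothesis yields that $R$ satisfies PBLR-2, i.e.\ $(I\rightarrow J)+(J\rightarrow I)=R$ and $(I\rightsquigarrow J)+(J\rightsquigarrow I)=R$ for all ideals $I,J$ of $R$. Next I would specialize to $I\cap J=\{0\}$ and apply Lemma \ref{lem0}(1), which identifies $I\rightarrow J$ with $I\rightarrow(I\cap J)=I\rightarrow\{0\}=I^{\ast}$ and $J\rightarrow I$ with $J^{\ast}$, and likewise $I\rightsquigarrow J$ with $I^{-}$ and $J\rightsquigarrow I$ with $J^{-}$. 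The PBLR-2 identities then read $I^{\ast}+J^{\ast}=R$ and $I^{-}+J^{-}=R$, which is exactly PBLR-3. (This is essentially the argument of Proposition \ref{prop2'''} applied in reverse.)

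The multiplication ring hypothesis plays no essential role in deriving PBLR-3 through this route; its purpose in the statement is presumably that, combined with Proposition \ref{prop1}, it simultaneously guarantees PBLR-1, so that $R$ becomes a full pseudo BL-ring and not merely a ring satisfying PBLR-3. Keeping the hypothesis in the statement therefore fits the broader narrative of the section.

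The only real obstacle is interpretive: if ``the quotient ring'' is read strictly as a single distinguished factor rather than as ``every factor'', then the two-step chain above collapses and one must instead use the multiplication ring factorizations from Definition \ref{def0.1} (for $I\subseteq J$ there exist $K,K'$ with $I=JK=K'J$) to transport PBLR-3 from that single quotient back to $R$. In the candidate lifting quotients that first come to mind, such as $R/(I+J)$ or $R/(I^{\ast}+J^{\ast})$, the images of the given $I$ and $J$ collapse to zero and PBLR-3 in the quotient becomes vacuous; so under this alternative reading the main difficulty would be producing a non-trivial pair of ideals in the quotient whose annihilators pull back to witness $I^{\ast}+J^{\ast}=R$ and $I^{-}+J^{-}=R$. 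I would expect the proof in the paper to follow the first, universal interpretation, since under that reading it is immediate from Propositions \ref{prop2'} and \ref{prop2'''}.
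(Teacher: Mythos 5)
Your proposal is correct and follows essentially the same route as the paper: the paper likewise reads the hypothesis as applying to every quotient, invokes Proposition \ref{prop2'} to get PBLR-2 for $R$, and then cites Proposition \ref{prop2'''} to conclude PBLR-3 (you merely unfold that last citation into the explicit Lemma \ref{lem0} computation). Your observation that the multiplication-ring hypothesis is not actually used is also consistent with the paper, whose opening appeal to Proposition \ref{prop2''} plays no role in the remainder of its argument.
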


\begin{proof}
Let $R$ ba a multiplication ring, then the quotient is a multiplication ring (by Proposition \ref{prop2''}). Since the quotient of $R$ satisfies PBLR-3 then, then the ring $R$ satisfies PBLR-2 (by Proposition \ref{prop2'}). We use the Proposition \ref{prop2'''} to conclude that PBLR-3 hlods for the ring $R$.
\end{proof}

\begin{corollary}\label{cor1}
Every multiplication ring satisfies PBLR-2 if and only if every multiplication ring satisfies PBLR-3.
\end{corollary}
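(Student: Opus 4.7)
The plan is to deduce the corollary directly from the propositions already established in this section, packaging them in both directions.

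For the forward implication, I would assume that every multiplication ring satisfies PBLR-2, fix an arbitrary multiplication ring $R$, and apply Proposition \ref{prop2'''} to $R$ itself. Since PBLR-2 holds in $R$ by hypothesis, PBLR-3 follows at once, giving the forward direction with essentially no work.

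For the reverse implication, the strategy is to use Proposition \ref{prop2'} as the bridge. Assume every multiplication ring satisfies PBLR-3, and let $R$ be a multiplication ring. To conclude that $R$ satisfies PBLR-2, it suffices by Proposition \ref{prop2'} to check that every quotient $R/K$ of $R$ satisfies PBLR-3. But Proposition \ref{prop2''} tells us that each such $R/K$ is itself a multiplication ring, so the standing hypothesis applies to $R/K$ and gives PBLR-3 for free. Feeding this back into Proposition \ref{prop2'} yields PBLR-2 in $R$.

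There is no real obstacle here beyond bookkeeping; the corollary is a formal consequence of Propositions \ref{prop2'''}, \ref{prop2'} and \ref{prop2''}, which together already contain both directions of the statement. I would keep the write-up short, simply citing the three propositions in the correct order and noting that the class-level quantifier (``every multiplication ring'') is what allows the hypothesis in the reverse direction to be reapplied to the quotient $R/K$, which is exactly what Proposition \ref{prop2'} requires.
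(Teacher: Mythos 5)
Your proposal is correct and matches the route the paper clearly intends: the corollary is stated without proof immediately after Propositions \ref{prop2'''}, \ref{prop2'}, \ref{prop2''} and \ref{prop3}, and your two directions are exactly the chain (PBLR-2 $\Rightarrow$ PBLR-3 via Proposition \ref{prop2'''}; and quotients of multiplication rings are multiplication rings, hence satisfy PBLR-3 by hypothesis, hence PBLR-2 via Proposition \ref{prop2'}). Your observation that the class-level quantifier is what lets the hypothesis be reapplied to $R/K$ is precisely the point that makes the reverse direction work.
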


\begin{corollary}\label{cor2}
Any quotient of a pseudo BL-ring by a proper ideal is again a pseudo BL-ring.
\end{corollary}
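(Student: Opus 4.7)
The plan is to check separately that the quotient $R/K$ satisfies PBLR-1 and PBLR-2 and that it is still generated by idempotents, using the characterizations already assembled in this section. So let $R$ be a pseudo BL-ring and $K$ a proper ideal of $R$.

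First I would verify PBLR-1 for $R/K$. Since $R$ satisfies PBLR-1, Proposition \ref{prop1} gives that $R$ is a multiplication ring; Proposition \ref{prop2''} then transfers this property to $R/K$, and applying Proposition \ref{prop1} in the opposite direction yields PBLR-1 for $R/K$. (Alternatively one could check it directly by noting that $(I/K)\cap(J/K)=(I\cap J)/K=(I\cdot(I\rightsquigarrow J))/K=(I/K)\cdot((I/K)\rightsquigarrow(J/K))$ via Lemma \ref{lem5}, and similarly on the other side.)

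Next I would verify PBLR-2 for $R/K$. Since $R$ satisfies PBLR-2, Proposition \ref{prop2'} guarantees that every quotient of $R$ satisfies PBLR-3. By the correspondence theorem for ideals, every quotient of $R/K$ is of the form $(R/K)/(J/K)$ with $K\subseteq J$ and is therefore canonically isomorphic to $R/J$, a quotient of $R$. Hence every quotient of $R/K$ satisfies PBLR-3, and a second application of Proposition \ref{prop2'}, this time to $R/K$, delivers PBLR-2 for $R/K$.

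Finally, the property of being generated by idempotents is obviously inherited by quotients: if $e\in R$ is idempotent with $e\cdot x=x\cdot e=x$, then the images in $R/K$ satisfy the same identities and the image of $e$ is still idempotent. Combined with the two axioms just established, this shows that $R/K$ is a pseudo BL-ring. I do not anticipate any genuine obstacle here, since the three propositions listed above together with the ideal correspondence theorem do all the work; the corollary is essentially just their composition.
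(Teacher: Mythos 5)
Your proof is correct and follows essentially the same route the paper intends for this corollary: PBLR-1 passes to the quotient via Propositions \ref{prop1} and \ref{prop2''}, PBLR-2 via Proposition \ref{prop2'} together with the ideal correspondence, and idempotent generation is inherited trivially. Your parenthetical direct computation with Lemma \ref{lem5} is also exactly what the paper carries out later in Proposition \ref{prop3'}(3) for homomorphic images, so there is nothing to add.
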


\begin{definition}(\cite{L91})
A ring $R$ is called a prime ring if the null ideal $(O)$ is a prime ring.
\end{definition}

\begin{proposition}\label{prop4}
Prime ideals of pseudo BL-rings are maximal.
\end{proposition}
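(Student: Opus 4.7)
Plan: I would assume $P$ is a prime ideal of the pseudo BL-ring $R$ and that there is an ideal $I$ with $P \subsetneq I$; the goal is to force $I = R$. The opening move is to apply PBLR-1 to the pair $(I,P)$. Since $P \subseteq I$, we have $I \cap P = P$, so
\[
P \;=\; (I \rightarrow P)\cdot I \;=\; I\cdot (I \rightsquigarrow P).
\]
Both decompositions express $P$ as a product of two ideals. Using that $P$ is prime and that $I \not\subseteq P$, the first identity gives $I \rightarrow P \subseteq P$ and the second gives $I \rightsquigarrow P \subseteq P$.

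Next, I would translate this information into the quotient $R/P$, which by Corollary \ref{cor2} is again a pseudo BL-ring and in which the zero ideal is prime. By Lemma \ref{lem5}, the containments $I \rightarrow P \subseteq P$ and $I \rightsquigarrow P \subseteq P$ become $(I/P)^{\ast} = (I/P)^{-} = \{0\}$; that is, $I/P$ is simultaneously a left and right dense ideal of $R/P$. Since $R/P$ is also a multiplication ring (by Propositions \ref{prop1} and \ref{prop2''}), the problem reduces to showing that, in a pseudo BL multiplication ring in which the zero ideal is prime, every nonzero ideal that is dense on both sides must equal the whole ring; applied to $I/P$ this yields $I = R$ and hence that $P$ is maximal.

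The critical step, and the main obstacle, is this last reduction. The multiplication-ring axiom by itself is not enough to rule out proper dense ideals in a prime ring, so PBLR-2 (prelinearity) must enter in an essential way. The natural route is to choose any element $a \in R \setminus I$, form the two-sided ideal $J$ it generates (which is well defined because $R$ is generated by idempotents, so $a \in RaR$), and apply PBLR-2 to the pair $(I,J)$: since $J \not\subseteq I$ and $J \not\subseteq P$, the decomposition $(I \rightarrow J)+(J \rightarrow I)=R$ together with the prime hypothesis on $P$ and the already established $I \rightarrow P \subseteq P$, $I \rightsquigarrow P \subseteq P$ should force $J \subseteq I$, contradicting the choice of $a$. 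Making this case analysis airtight — in particular, checking that the element we extract from $(I \rightarrow J)+(J \rightarrow I)=R$ cannot simultaneously respect all of the containments derived from primality unless $I = R$ — is where the real work of the proof lies.
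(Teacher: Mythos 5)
Your setup is sound and, up to the point where you conclude that $I/P$ has zero left and right annihilator in $R/P$, it runs parallel to the paper's argument: the paper also passes to the quotient $R/P$ (a pseudo BL-ring by Corollary \ref{cor2}), notes that $R/P$ is a prime ring, and reads off $(I/P)^{\ast}=(I/P)^{-}=\{0\}$. You reach the same conclusion via PBLR-1 and primality of $P$ in $R$ rather than primality of $\{0\}$ in $R/P$; either route is fine.

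The genuine gap is exactly the step you flag yourself: you never prove that a proper ideal of the prime pseudo BL-ring $R/P$ cannot be dense on both sides, and the PBLR-2 argument you sketch cannot deliver this on its own. Prelinearity holds in every linearly ordered pseudo BL-algebra, and linearly ordered BL-algebras possessing a proper class of dense elements ($x^{\ast}=x^{-}=0$ with $x\neq 1$) exist in abundance (any nontrivial ordinal sum of hoops), so no case analysis on $(I\rightarrow J)+(J\rightarrow I)=R$ alone can force a dense ideal to equal $R/P$; some genuinely ring-theoretic input is required. The paper closes the argument by invoking the double-annihilator identity in the quotient, $(I/P)=((I/P)^{\ast})^{\ast}=((I/P)^{-})^{-}$, which combined with $(I/P)^{\ast}=(I/P)^{-}=\{0\}$ gives $I/P=\{0\}^{\ast}=R/P$ immediately. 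That identity — equivalently, that every ideal of the prime quotient is an annihilator ideal, so that $A(R/P)$ coincides with its MV-center — is the missing ingredient; note that the paper itself asserts it without proof, so to make either argument airtight you would need to establish this fact (or an equivalent one) for prime pseudo BL-rings rather than lean on prelinearity.
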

\begin{proof}
Let $I$, $J$ be two ideals of a pseudo BL-ring $R$, such that $I$ is a prime ideal and $I\subseteq J\subseteq R$ (with $I\neq J$). Since $I$ is prime, it is known that $R/I$ is a prime ring, so $(J/I)^{\ast}=(J/I)^-=0/I$.

Since $R/I$ is a pseudo BL-ring (by Corollary \ref{cor2}), $(J/I)=((J/I)^{\ast})^{\ast}=((J/I)^{-})^{-}=(0/I)^{\ast}=(0/I)^{-}=R/P$. Thus $J=R$ and $I$ is a maximal ideal of $R$.
\end{proof}

\begin{proposition}\label{prop5}
Let $R$ be a pseudo BL-ring and $I$ an ideal of $R$ such that $I\cap I^{\ast}=\{0\}$ and $I\cap I^-=\{0\}$. Then $I$ is a pseudo BL-ring.
\end{proposition}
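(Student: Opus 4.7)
The plan is to show that the two annihilator hypotheses force $I$ to be a ring-theoretic direct summand of $R$; once the decomposition $R=I\oplus(I^{\ast}\cap I^-)$ is in hand, $I$ inherits a multiplicative identity from $R$, becomes generated by a single idempotent, and its lattice of two-sided ideals coincides with the principal down-set of $I$ in $A(R)$. The pseudo BL-algebra operations of $A(R)$ then restrict to those of $A(I)$, so both PBLR-1 and PBLR-2 for $I$ follow from the same axioms for $R$.

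First I would compute the implication ideals between $I$ and its annihilators. Expanding $I\cap I^{\ast}=\{0\}$ via PBLR-1 yields $(I\rightarrow I^{\ast})\cdot I=I\cdot(I\rightsquigarrow I^{\ast})=\{0\}$; combined with the automatic inclusions $I^{\ast}\subseteq I\rightarrow I^{\ast}$ and $I^-\subseteq I\rightsquigarrow I^{\ast}$, this forces $I\rightarrow I^{\ast}=I^{\ast}$ and $I\rightsquigarrow I^{\ast}=I^-$. Symmetric treatment of $I\cap I^-=\{0\}$ gives $I\rightarrow I^-=I^{\ast}$ and $I\rightsquigarrow I^-=I^-$. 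In the reverse direction, any $x\in I^{\ast}\rightarrow I$ automatically satisfies $xI^{\ast}\subseteq I^{\ast}$, so $xI^{\ast}\subseteq I\cap I^{\ast}=\{0\}$ and $x\in I^{\ast\ast}$; analogous arguments give $I^{\ast}\rightarrow I=I^{\ast\ast}$, $I^{\ast}\rightsquigarrow I=I^{\ast-}$, $I^-\rightarrow I=I^{-\ast}$ and $I^-\rightsquigarrow I=I^{--}$. Substituting into PBLR-2 for the pairs $(I,I^{\ast})$ and $(I,I^-)$ produces in particular the two decompositions $R=I^{\ast}+I^{-\ast}$ and $R=I^-+I^{\ast-}$.

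Write $1=a+b=c+d$ with $a\in I^{\ast}$, $b\in I^{-\ast}$, $c\in I^-$, $d\in I^{\ast-}$, and expand $1=(a+b)(c+d)$. The cross terms $ad$ and $bc$ vanish via $I^{\ast}\cdot I^{\ast-}=\{0\}$ and $I^{-\ast}\cdot I^-=\{0\}$ (specializations of the automatic identities $X\cdot X^-=0$ and $X^{\ast}\cdot X=0$). Setting $e:=bd$ and $f:=ac$, the relations $ef=fe=0$ (from $e\in I^{\ast-},\ f\in I^{\ast}$ and $e\in I^{-\ast},\ f\in I^-$) together with $e+f=1$ show that $e$ and $f$ are orthogonal idempotents; the same annihilator identities give $eRf\subseteq I^{-\ast}\cdot I^-=\{0\}$ and $fRe\subseteq I^{\ast}\cdot I^{\ast-}=\{0\}$, so $R=eRe\oplus fRf$ as a ring, with $f\in I^{\ast}\cap I^-$ and $e\in I^{-\ast}\cap I^{\ast-}$.

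Since $f$ annihilates $I$ on both sides, the identity $x=(e+f)x(e+f)=exe$ holds for every $x\in I$, yielding $I\subseteq eRe$. Conversely any $y=eye\in I^{\ast}$ satisfies $ye=0$ because $e\in I^{\ast-}$, forcing $y=0$; symmetrically $eRe\cap I^-=\{0\}$. Combined with $fRf\subseteq I^{\ast}\cap I^-$ and the direct sum, this pins down $eRe=I$ and $fRf=I^{\ast}\cap I^-$, so $R=I\oplus(I^{\ast}\cap I^-)$. Then $e$ is a two-sided multiplicative identity for $I$ (making $I$ generated by this single idempotent), every two-sided ideal of $I$ is automatically a two-sided ideal of $R$ contained in $I$ (because the complement annihilates $I$ on both sides), and the operations $\cap,\ +,\ \cdot,\ \rightarrow,\ \rightsquigarrow$ in $A(I)$ are the restrictions of those in $A(R)$; hence PBLR-1 and PBLR-2 for $I$ follow at once from the same axioms for $R$. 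The main obstacle is the identification $eRe=I$: the inclusion $I\subseteq eRe$ and the vanishings $eRe\cap I^{\ast}=eRe\cap I^-=\{0\}$ are straightforward, but ruling out a proper containment $I\subsetneq eRe$ genuinely requires combining both annihilator hypotheses with all of the decompositions furnished by PBLR-2.
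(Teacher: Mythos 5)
There is a genuine gap, and it sits exactly where you flagged it: the identification $eRe=I$. Everything up to the ring decomposition $R=eRe\oplus fRf$ is essentially sound (granting unitality, on which see below): the computations $I\rightarrow I^{\ast}=I^{\ast}$, $I^{\ast}\rightarrow I=I^{\ast\ast}$, etc., the vanishing of the cross terms, and the inclusions $I\subseteq eRe$, $fRf\subseteq I^{\ast}\cap I^{-}$, $eRe\cap I^{\ast}=eRe\cap I^{-}=\{0\}$ all check out. But these facts only say that $I$ sits inside the summand $eRe$ as an ideal whose left and right annihilators in that summand are zero; they cannot rule out $I\subsetneq eRe$. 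Concretely, take $R=\mathbb{Z}$ (a commutative BL-ring, hence a pseudo BL-ring, since it is a multiplication ring satisfying PBLR-2) and $I=2\mathbb{Z}$: then $I^{\ast}=I^{-}=\{0\}$, so the hypotheses $I\cap I^{\ast}=I\cap I^{-}=\{0\}$ hold, all four decompositions furnished by PBLR-2 read $R=\{0\}+\mathbb{Z}$, and your construction gives $e=1$, $f=0$, $eRe=\mathbb{Z}\neq I$. So the annihilator hypotheses do not ``pin down'' $eRe=I$, and no combination of the PBLR-2 decompositions will recover it: any dense ideal (one with $I^{\ast}=I^{-}=\{0\}$) satisfies your hypotheses while being as far from a direct summand as one likes, because BL-rings, unlike {\L}ukasiewicz rings, do not satisfy $I^{\ast\ast}=I$. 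A secondary issue is that you write $1=a+b=c+d$: a pseudo BL-ring is only assumed to be generated by idempotents, not unital, so the argument would have to be run with the local idempotents $e$ satisfying $ex=xe=x$ for each $x\in I$.

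For comparison, the paper's proof avoids any global decomposition: it passes from $I\cap I^{\ast}=\{0\}$ and $I\cap I^{-}=\{0\}$ to $I+I^{\ast}=R$ and $I+I^{-}=R$, splits each local idempotent as $e=m_{1}+m_{2}$ with $m_{1}\in I$, $m_{2}\in I^{\ast}$ (and similarly on the other side), and checks that $m_{1}$, $m_{3}$ are idempotents of $I$ acting as identities on $x$, so that $I$ is generated by idempotents and its ideals are ideals of $R$. You should note, however, that the step $I\cap I^{\ast}=\{0\}\Rightarrow I+I^{\ast}=R$ is not what PBLR-3 delivers (PBLR-3 gives $I^{\ast}+I^{\ast\ast}=R$), and the example $R=\mathbb{Z}$, $I=2\mathbb{Z}$ defeats that step as well. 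That observation does not rescue your argument: the specific claim $eRe=I$, on which your entire reduction of PBLR-1 and PBLR-2 for $I$ rests, is the one that fails, and you yourself identify it as unproven.
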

\begin{proof}
It is well known that an ideal is a subring, thus $Id(I)\subseteq Id(R)$ because
$$\left\{
                                                                                    \begin{array}{ll}
                                                                                      I\cap I^{\ast}=\{0\} & \hbox{} \\
                                                                                      I\cap I^-=\{0\} & \hbox{}
                                                                                    \end{array}
                                                                                  \right.
$$
 imply that, $$\left\{
                 \begin{array}{ll}
                   I+I^{\ast}=I\vee I^{\ast}=R & \hbox{} \\
                   I+I^-=I\vee I^-=R & \hbox{}
                 \end{array}
               \right.
$$

Since $R$ is generated by idempotents, for any $x\in I$, there are $e,e'\in R$, (right and left idempotents) $m_1, m_3\in I$ and $m_2\in I^{\ast}$, $m_4\in I^-$ such that
$$ex=x, \ e'x=x$$ and $$\left\{
                                 \begin{array}{ll}
                                   m_1+m_2=e & \hbox{} \\
                                   m_3+m_4=e' & \hbox{}
                                 \end{array}
                               \right.
$$
Then $$\left\{
        \begin{array}{ll}
          m_1e+m_2e=e^2=e=m_1+m_2 \Rightarrow m_1-m_1e=m_2e-m_2\in I\cap I^{\ast}=\{0\} &  \\
          e'm_3+e'm_4=(e')^2=e'=m_3+m_4 \Rightarrow m_3-e'm_3=e'm_4-m_4\in I\cap I^{-}=\{0\} &
        \end{array}
      \right.$$

It follow that $$\left\{
                   \begin{array}{ll}
                     m_1=m_1e,~ m_2e=m_2, &  \\
                     e'm_3=m_3,~ e'm_4=m_4, &
                   \end{array}
                 \right.
$$ $x=m_1x$, $x=xm_3$, $(m_1)^2=m_1$, $(m_3)^2=m_3$ since $m_2\in M^{\ast}$, $m_4\in I^-$. Thus $m_1$ and $m_3$ are idempotents in $I$, and $I$ is generated by idempotents.

Since $Id(M)\subseteq Id(R)$, then by Proposition \ref{prop3}, PBLR-2 holds for $I$. Hence $I$ is a pseudo BL-ring.
\end{proof}

\begin{proposition}\label{prop3'}
Let $R$ be a pseudo BL-rings. $R$ are closed under each of the following operations,

(1)finite direct products,

(2) arbitrary direct sums,

(3) homomorphic images.
\end{proposition}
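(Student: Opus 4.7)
The plan is to dispose of (3) immediately and then handle (1) and (2) by a single componentwise verification. For (3), any homomorphic image of $R$ is isomorphic to a quotient $R/I$ for some two-sided ideal $I$, and Corollary~\ref{cor2} already states that such a quotient is a pseudo BL-ring, so nothing further is needed.

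For (1) and (2), I would set $R=\bigoplus_{i\in\Lambda}R_i$ (with $\Lambda$ finite in case (1)), where each $R_i$ is a pseudo BL-ring. First I would check that $R$ itself is generated by idempotents: any $r\in R$ has finite support $\{i_1,\dots,i_n\}$, and picking idempotents $e_{i_k}\in R_{i_k}$ that act as two-sided identities on the respective components of $r$, the element $e=e_{i_1}+\cdots+e_{i_n}$ is an idempotent of $R$ with $er=re=r$. Next, I would establish the structural fact that every ideal $J$ of $R$ has the shape $J=\bigoplus_{i\in\Lambda}J_i$, where $J_i=J\cap R_i$ is an ideal of $R_i$: the inclusion $\supseteq$ is clear, and the reverse inclusion follows from the identity $r=\sum_i e_i r e_i$ available for each $r\in J$, using local idempotents $e_i$ built as above together with the orthogonality of distinct summands.

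Once this decomposition is available, a routine check yields the componentwise formulas
\[
(I\cap J)_i=I_i\cap J_i,\ \ (I+J)_i=I_i+J_i,\ \ (I\cdot J)_i=I_i\cdot J_i,
\]
together with $(I\rightarrow J)_i=I_i\rightarrow J_i$ and $(I\rightsquigarrow J)_i=I_i\rightsquigarrow J_i$, the last two because the condition $xI\subseteq J$ translates into $x_iI_i\subseteq J_i$ in every coordinate. Both PBLR-1 and PBLR-2 for $R$ then reduce directly to the same axioms holding in each $R_i$. The main obstacle is case (2): an infinite direct sum has no global unit, so one cannot pass through a single idempotent playing the role of~$1$. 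This is circumvented precisely by the finite-support property of elements of a direct sum, which means that only finitely many local idempotents are ever needed in any one calculation, and these suffice for both the decomposition of ideals and each of the identities above.
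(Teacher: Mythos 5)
Your proposal is correct and follows essentially the same route as the paper: decompose every ideal of the direct product/sum componentwise and verify PBLR-1 and PBLR-2 coordinate by coordinate, with homomorphic images handled through the quotient construction. You in fact supply two details the paper's proof only asserts (that the direct sum is itself generated by idempotents, and that every ideal decomposes as $\bigoplus_i (J\cap R_i)$), and your appeal to Corollary~\ref{cor2} for part (3) is a legitimate shortcut past the paper's explicit quotient computation.
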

\begin{proof}

$\left(1\right)$ Let $R=\displaystyle{\prod_{k=1}^n} R_{k}$ where each $R_{k}$ is a pseudo BL-ring. Since each $R_{k}$ is generated by idempotents, one gets that any ideal $\mathit{I}$ of $\mathit{R}$ is of the form, $$I=\displaystyle{\prod_{k=1}^n}I_{k},$$ where $I_{k}$ is an ideal of $R_{k}.$

Let $I=\displaystyle{\prod_{k=1}^n}I_{k}$ and $J=\displaystyle{\prod_{k=1}^n}J_{k}$ to be two ideals of $\mathit{R}$, we get,

$$I\cdot J=\displaystyle{\prod_{k=1}^n}I_{k}\cdot\displaystyle{\prod_{k=1}^n}J_{k}
=\displaystyle{\prod_{k=1}^n}I_{k}\cdot J_{k}.$$

Analogously, we also have $I\rightarrow J=\displaystyle{\prod_{k=1}^n}I_{k}\rightarrow J_{k}$
and $I\rightsquigarrow J=\displaystyle{\prod_{k=1}^n}I_{k}\rightsquigarrow J_{k}.$

Then $$I\cap J=\displaystyle{\prod_{k=1}^n}I_{k}\cap\displaystyle{\prod_{k=1}^n} J_{k}=\displaystyle{\prod_{k=1}^n}(I_{k}\cap J_{k})=\displaystyle{\prod_{k=1}^n}(I_{k}\cdot(I_{k}\rightsquigarrow J_{k})=\displaystyle{\prod_{k=1}^n}I_{k}\cdot\displaystyle{\prod_{k=1}^n}(I_{k}\rightsquigarrow J_{k})=I\cdot(I\rightsquigarrow J)$$
and $$\displaystyle{\prod_{k=1}^n}(I_{k}\cap J_{k})=\displaystyle{\prod_{k=1}^n}(I_{k}\rightarrow J_{k})\cdot I_{k}=\displaystyle{\prod_{k=1}^n}(I_{k}\rightarrow J_{k})\cdot \displaystyle{\prod_{k=1}^n}I_{k}=(I\rightarrow J)\cdot I.$$

Thus $R$ satisfies PBLR-1, and because $I+J=\displaystyle{\prod_{k=1}^n}I_{k}+
\displaystyle{\prod_{k=1}^n}J_{k}=\displaystyle{\prod_{k=1}^n}(I_{k}+J_{k}),$
we have, $$(I\rightarrow J)+(J\rightarrow I)=\displaystyle{\prod_{k=1}^n}(I_{k}\rightarrow J_{k})+\displaystyle{\prod_{k=1}^n}(J_{k}\rightarrow I_{k})=\displaystyle{\prod_{k=1}^n}((I_{k}\rightarrow J_{k})+(J_{k}\rightarrow I_{k}))=\displaystyle{\prod_{k=1}^n}R_{k}=R,$$ and $$(I\rightsquigarrow J)+(J\rightsquigarrow I)=\displaystyle{\prod_{k=1}^n}(I_{k}\rightsquigarrow J_{k})+\displaystyle{\prod_{k=1}^n}(J_{k}\rightsquigarrow I_{k})=\displaystyle{\prod_{k=1}^n}((I_{k}\rightsquigarrow J_{k})+(J_{k}\rightsquigarrow I_{k}))=\displaystyle{\prod_{k=1}^n}R_{k}=R.$$ Thus $\mathit{R}$ satisfies PBLR-2. Therefore $\mathit{R}$ is a pseudo BL-ring.

$\left(2\right)$ Let $R=\displaystyle{\bigoplus_{k=1}^n}R_{k},$ where each $R_{k}$ is a pseudo BL-ring. Since each $R_{k}$ is generated by idempotents, one gets that any ideal $I$ of $R$ is of the form, $$I=\displaystyle{\bigoplus_{k=1}^n}I_{k}$$ where $I_{k}$ is an ideal of $R_{k}.$

Let $I=\displaystyle{\bigoplus_{k=1}^n}I_{k}$ and $J=\displaystyle{\bigoplus_{k=1}^n}J_{k}$ to be two ideals of $R,$ we get,
$$I\cdot J=\displaystyle{\bigoplus_{k=1}^n}I_{k}\cdot\displaystyle{\bigoplus_{k=1}^n}J_{k}=\displaystyle{\bigoplus_{k=1}^n}I_{k}\cdot J_{k};$$
analogously, we also have $I\rightarrow J=\displaystyle{\bigoplus_{k=1}^n}I_{k}\rightarrow J_{k}$ and $I\rightsquigarrow J=\displaystyle{\bigoplus_{k=1}^n}I_{k}\rightsquigarrow J_{k}.$ Then $$I\cap J=\displaystyle{\bigoplus_{k=1}^n}I_{k}\cap\displaystyle{\bigoplus_{k=1}^n}J_{k}
=\displaystyle{\bigoplus_{k=1}^n}(I_{k}\cap J_{k})=\displaystyle{\bigoplus_{k=1}^n}(I_{k}\cdot (I_{k}\rightsquigarrow J_{k})=\displaystyle{\bigoplus_{k=1}^n}I_{k}\cdot\displaystyle{\bigoplus_{k=1}^n}(I_{k}\rightsquigarrow J_{k})=I\cdot(I\rightsquigarrow J)$$
and $$\displaystyle{\bigoplus_{k=1}^n}(I_{k}\cap J_{k})=\displaystyle{\bigoplus_{k=1}^n}(I_{k}\rightarrow  J_{k})\cdot I_{k}=\displaystyle{\bigoplus_{k=1}^n}(I_{k}\rightarrow J_{k})\cdot\displaystyle{\bigoplus_{k=1}^n}I_{k}=(I\rightarrow J)\cdot I.$$
Thus $\mathit{R}$ satisfies PBLR-1, and because $I+J=\displaystyle{\bigoplus_{k=1}^n}I_{k}+\displaystyle{\bigoplus_{k=1}^n}J_{k}=\displaystyle{\bigoplus_{k=1}^n}(I_{k}+J_{k}),$
we have, $$(I\rightarrow J)+(J\rightarrow I)=\displaystyle{\bigoplus_{k=1}^n}(I_{k}\rightarrow J_{k})+\displaystyle{\bigoplus_{k=1}^n}(J_{k}\rightarrow I_{k})=\displaystyle{\bigoplus_{k=1}^n}((I_{k}\rightarrow J_{k})+(J_{k}\rightarrow I_{k}))=\displaystyle{\bigoplus_{k=1}^n}R_{k}=R$$ and $$(I\rightsquigarrow J)+(J\rightsquigarrow I)=\displaystyle{\bigoplus_{k=1}^n}(I_{k}\rightsquigarrow J_{k})+\displaystyle{\bigoplus_{k=1}^n}(J_{k}\rightsquigarrow I_{k})=\displaystyle{\bigoplus_{k=1}^n}((I_{k}\rightsquigarrow J_{k})+(J_{k}\rightsquigarrow I_{k}))=\displaystyle{\bigoplus_{k=1}^n}R_{k}=R.$$ Thus $R$ satisfies PBLR-2. Therefore $R$ is a pseudo BL-ring.

$\left(3\right)$ Let $R$ be a PBL-ring and $I$ be an ideal of $R$. We recall that the ideals of $R/I$ are the form $J/I$, where $J$ is an ideal of $R$
with $I\subseteq J.$

Let $J$ and $K$ two ideals of $R$ such that $I\subseteq J,K.$
Then $(J/I)\cap(K/I)=(J\cap K)/I=((J\rightarrow K)\cdot J)/I=(J\rightarrow K)/I\cdot (J/I)=((J/I)\rightarrow (K/I))\cdot (J/I)$; furthermore, $(J\cap K)/I=J\cdot (J\rightsquigarrow K)/I=(J/I)\cdot(J\rightsquigarrow K)/I=(J/I)\cdot(J/I\rightsquigarrow K/I),$ thus $R/I$ satisfies PBLR-1.

We also have $(J/I\rightarrow K/I)+(K/I\rightarrow J/I)=(J+K)/I\rightarrow (K+J)/I=((J\rightarrow K)+(K\rightarrow J))/I=R/I$ and $(J/I\rightsquigarrow K/I)+(K/I\rightsquigarrow J/I)=(J+K)/I\rightsquigarrow (K+J)/I=((J\rightsquigarrow K)+(K\rightsquigarrow J))/I=R/I$  thus $R/I$ satisfies PBLR-2.
Therefore $R/I$ is a pseudo BL-ring.
\end{proof}

We establish the connection between pseudo BL-ring and some well known rings.

\begin{definition}\label{def5}
A unitary ring is called a Von Neumann ring if $R/P$ is a division ring for all prime ideals of $R$.
\end{definition}

\begin{definition}
A non commutative Bear ring is a ring in which for every ideal $I$ of $R$, there exist idempotents $e, e'\in R$ such that $I^{\ast}=eR$ and $I^-=Re'$.
\end{definition}
An ideal in a Bear ring (commutative or not) is called a Bear-ideal if for every $a, b\in R$ such that $a-b\in I$, then $a^{\ast}-b^{\ast}\in I$ and $a^--b^-\in I$.
\begin{definition}(\cite{HLNN18})
A reducing ring is a ring in which 0 is the only nilpotent element, that is, the only element $x\in R$ for which there exists an integer $n\geq1$ such that $x^n=0$.
\end{definition}

\begin{lemma}\label{lem6}
Let $R$ be a ring and $I$, $J$ be ideals of $R$. If $I\cap J=\{0\}$, then $I\subseteq J^{\ast}$, $I\subseteq J^{-}$.
\end{lemma}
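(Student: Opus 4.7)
The plan is to unpack the definitions of the right and left annihilators and use the fact that $I$ and $J$ are both two-sided ideals, so that any product of an element of $I$ with an element of $J$ lies in $I\cap J=\{0\}$.

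More precisely, I would start by fixing an arbitrary element $x\in I$ and an arbitrary element $y\in J$. Because $I$ is a two-sided ideal and $x\in I$, the product $xy$ belongs to $I$; because $J$ is a two-sided ideal and $y\in J$, the same product $xy$ belongs to $J$. Hence $xy\in I\cap J=\{0\}$, so $xy=0$. Since $y\in J$ was arbitrary, this shows $xJ=\{0\}$, and hence $x\in J^{\ast}$ by the definition $J^{\ast}=\{r\in R\mid rJ=\{0\}\}$. As $x\in I$ was arbitrary, we conclude $I\subseteq J^{\ast}$.

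The inclusion $I\subseteq J^{-}$ is entirely symmetric: for $x\in I$ and $y\in J$, the product $yx$ lies in $J$ (by left-absorption for $J$) and in $I$ (by right-absorption for $I$), hence in $I\cap J=\{0\}$; so $Jx=\{0\}$, i.e.\ $x\in J^{-}$.

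There is no real obstacle here: the whole argument rests only on the two-sided ideal property and the hypothesis $I\cap J=\{0\}$. The only point worth double-checking is that both $I$ and $J$ are being used as two-sided ideals (which is the convention of the paper, since $A(R)$ is defined in terms of two-sided ideals). No use of the pseudo BL-ring axioms, generation by idempotents, or multiplication-ring property is needed, so the lemma holds for any ring.
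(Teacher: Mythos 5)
Your proof is correct and is exactly the argument the paper intends: the paper's own proof is only the one-line remark that the claim follows from the definitions of an ideal and of $(^{\ast})$, $(^{-})$, and your computation ($xy\in I\cap J=\{0\}$ via the two-sided ideal property, hence $xJ=\{0\}$, and symmetrically for $J^{-}$) is the standard way to fill that in. Nothing further is needed.
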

\begin{proof}
This is easily derived from the definition of an ideal and the operations $(^{\ast})$ and $(^-)$ involved.
\end{proof}

\begin{proposition}\label{prop6}
In a reduced ring with identities, PBLR-3 holds if and only if this ring is a Bear ring.
\end{proposition}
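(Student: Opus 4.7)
The plan is to prove both implications using Lemma \ref{lem6} together with the reducedness hypothesis to control intersections such as $I\cap I^{\ast}$ and $I^{\ast}\cap (I^{\ast})^{\ast}$; the unity of $R$ provides the decomposition $1=e+f$ needed to convert sum-equalities into idempotent representations.

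For the easier direction ($\Leftarrow$), I assume $R$ is a Bear ring with $1$ and take ideals $I,J$ with $I\cap J=\{0\}$. By the Bear hypothesis write $I^{\ast}=eR$ and $J^{\ast}=fR$ with $e^{2}=e$ and $f^{2}=f$. Lemma \ref{lem6} gives $J\subseteq I^{\ast}=eR$, so each $j\in J$ has the form $ea$ and therefore $ej=e^{2}a=ea=j$. Hence $(1-e)J=\{0\}$, which means $1-e\in J^{\ast}=fR$, giving $1-e=fc$ for some $c\in R$. This yields $1=e+fc\in eR+fR=I^{\ast}+J^{\ast}$, so $I^{\ast}+J^{\ast}=R$. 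The mirrored computation using $I^{-}=Rg$, $J^{-}=Rh$, and the inclusion $I\subseteq J^{-}$ from Lemma \ref{lem6} gives $I^{-}+J^{-}=R$, establishing PBLR-3. Note that reducedness is not needed for this half.

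For the main direction ($\Rightarrow$), I assume $R$ is reduced, has $1$, and satisfies PBLR-3. The key preliminary observation is that in a reduced ring, $I\cap I^{\ast}=\{0\}$ and $I^{\ast}\cap (I^{\ast})^{\ast}=\{0\}$ for every ideal $I$: any element $x$ in either intersection satisfies $x^{2}=0$, so $x=0$. Applying PBLR-3 to the pair $(I,I^{\ast})$ gives $I^{\ast}+(I^{\ast})^{\ast}=R$, and I write $1=e+f$ with $e\in I^{\ast}$, $f\in (I^{\ast})^{\ast}$. Because $I^{\ast}$ and $(I^{\ast})^{\ast}$ are two-sided ideals, both $ef$ and $fe$ lie in $I^{\ast}\cap (I^{\ast})^{\ast}=\{0\}$, so $ef=fe=0$; consequently $e=e(e+f)=e^{2}$ is idempotent. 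For any $x\in I^{\ast}$, the decomposition $x=1\cdot x=ex+fx$ gives $fx\in I^{\ast}\cap (I^{\ast})^{\ast}=\{0\}$, hence $x=ex\in eR$. Together with $eR\subseteq I^{\ast}$ (since $e\in I^{\ast}$ and $I^{\ast}$ is a right ideal), I obtain $I^{\ast}=eR$. A strictly symmetric argument applied to $I^{-}+(I^{-})^{-}=R$ produces an idempotent $e'$ with $I^{-}=Re'$, so $R$ is a Bear ring.

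The main obstacle is keeping the left-right conventions consistent: one must multiply the identity $1=e+f$ on the correct side both to obtain the idempotent relation $e=e^{2}$ and to derive $I^{\ast}=eR$ (rather than $Re$), and this is only legal because annihilators of two-sided ideals are themselves two-sided. Everything else reduces to routine bookkeeping with the definitions of $(\,\cdot\,)^{\ast}$ and $(\,\cdot\,)^{-}$.
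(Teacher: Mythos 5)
Your proof is correct and follows essentially the same route as the paper's: for the forward direction you apply PBLR-3 to the pairs $(I,I^{\ast})$ and $(I,I^{-})$ (using reducedness to see that $I\cap I^{\ast}$, $I^{\ast}\cap(I^{\ast})^{\ast}$, etc.\ vanish), split $1$ accordingly and extract the idempotent generators, and for the converse you use Lemma \ref{lem6} together with the idempotent generating the relevant annihilator to decompose $1$. If anything, your write-up is slightly more careful than the paper's in justifying $ef=fe=0$ (and hence $e=e^{2}$) via the two-sidedness of the annihilator ideals and the vanishing intersection, a point the paper leaves implicit.
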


\begin{proof}
Let $R$ be a reduced ring with identities in which PBLR-3 hlods. Let $K$ be an ideal of $R$, then $K\cap K^{\ast}=\{0\}$ and $K\cap K^-=\{0\}$ implies $K^{\ast}+(K^{\ast})^{\ast}=R$ and $K^{-}+(K^{-})^{-}=R$. Thus, there are $k_1\in K^{\ast}$, $k_1^{'}\in(K^{\ast})^{\ast}$, $k_2\in K^{-}$, $k_2^{'}\in (K^{-})^{-}$ such that $1_L=k_1+k_1^{'}$ and $1_R=k_2+k_2^{'}$.These mean that,
$$\left\{
                    \begin{array}{ll}
                      k_1=k_1\cdot 1_L=k_1(k_1+k_1^{'})=k_1^2 & \hbox{} \\
                      k_2=1\cdot k_2=(k_2+k_2^{'})k_2=k_2^2 & \hbox{}
                    \end{array}
                  \right.
$$ which imply that $k_1$ and $k_2$ are idempotents.

It remains to show that $K^{\ast}=k_1R$ and $K^-=Rk_2$. Let $x\in K^{\ast}$, then $x=1_L\cdot x=(k_1+k_1^{'})x=k_1x\in k_1R$.

Let $y\in k_1R$, then there is $r\in R$ such that $y=k_1r\in K^{\ast}$ because $k_1\in K^{\ast}$. Therefore $K^{\ast}=k_1R$.\\
By the similar way, $K^-=Rk_2$. Then $R$ is a non commutative Bear ring.

Conversely, assume that $I$ and $J$ are two ideals of a Bear ring such that $I\cap J=\{0\}$. We have to show that $I^{\ast}+J^{\ast}=R=I^-+J^-$.

Since $R$ is a Bear ring, there exist $e$ and $e'$ idempotents in $R$ such that $J^{\ast}=eR$ and $J^-=Re'$. Thus $I\subseteq eR$ and $I\subseteq Re'$ (by Lemma \ref{lem6}). Hence for all $i\in I$, $i=er=r'e'$ for some $r,r'\in R$ and $$\left\{
                                                                             \begin{array}{ll}
                                                                               0=er-er=er-e^2r=(1_L-e)er & \hbox {} \\
                                                                               0=r'e'-e'r'=r'e'-(e')^2r'=r'e'(1_R-e') & \hbox{}
                                                                             \end{array}
                                                                           \right.
$$ which imply $1_L-e\in I^{\ast}$ and $1_R-e'\in I^-$.\\
Therefore, $$\left\{
              \begin{array}{ll}
                1_L=(1_L-e)+e\in I^{\ast}+(I^{\ast})^{\ast} & \hbox{} \\
                1_R=(1_R-e')+e'\in I^-+(I^-)^- & \hbox{}
              \end{array}
            \right.
$$

Since $(I^{\ast})^{\ast}$ and $I^-+(I^-)^-$, then $I^{\ast}+J^{\ast}=R$ and $I^-+J^-=R$.
\end{proof}

\begin{proposition}
Every Bear-ideals $I$, $J$ of a Bear ring $R$ satisfy, $(I\rightarrow J)+(J\rightarrow I)=R$ and $(I\rightsquigarrow J)+(J\rightsquigarrow I)=R$.
\end{proposition}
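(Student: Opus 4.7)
My strategy is to reduce the general case to the disjoint-intersection situation handled by the converse direction of Proposition \ref{prop6}, by passing to the quotient $R/K$ where $K = I \cap J$. A careful reading of the proof of Proposition \ref{prop6} shows that its Bear $\Rightarrow$ PBLR-3 direction never invokes reducedness, so it may be applied to any Bear ring, in particular to $R/K$.

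First I would verify that $K = I \cap J$ is itself a Bear-ideal. If $a - b \in K$, then $a - b$ lies in both $I$ and $J$; applying the Bear-ideal property of each, $a^\ast - b^\ast$ and $a^- - b^-$ lie in $I$ and in $J$, hence in $K$.

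The key step is to show that $R/K$ is a Bear ring. For an arbitrary ideal of $R/K$, say $L/K$ with $K \subseteq L$, the Bear property of $R$ supplies idempotents $e, e' \in R$ with $L^\ast = eR$ and $L^- = Re'$. By Lemma \ref{lem5}, $(L/K)^\ast = (L \to K)/K$, and $L^\ast \subseteq L \to K$ is immediate, giving $\overline{e}(R/K) \subseteq (L/K)^\ast$. The reverse inclusion is precisely where the Bear-ideal property of $K$ enters: for $x$ with $xL \subseteq K$, the element-wise descent of $^\ast$ through $K$ produces an $x' \in L^\ast = eR$ with $x - x' \in K$, so $\overline{x} \in \overline{e}(R/K)$. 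Since $\overline{e}$ remains idempotent in $R/K$, the annihilator $(L/K)^\ast$ is principally generated by an idempotent; the argument for $(L/K)^-$ is entirely symmetric, so $R/K$ is a Bear ring.

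With $R/K$ a Bear ring in hand, I would apply PBLR-3 (i.e.\ the converse half of Proposition \ref{prop6}) to the ideals $I/K$ and $J/K$, which satisfy $(I/K) \cap (J/K) = \{\overline{0}\}$. This yields
\[
(I/K)^\ast + (J/K)^\ast = R/K \quad\text{and}\quad (I/K)^- + (J/K)^- = R/K.
\]
Rewriting the left-hand side via Lemma \ref{lem5} gives $(I \to K)/K + (J \to K)/K = R/K$, and Lemma \ref{lem0} identifies $I \to K = I \to (I \cap J) = I \to J$ and $J \to K = J \to I$. Since $K \subseteq I \to J$, lifting back to $R$ yields $(I \to J) + (J \to I) = R$, and the identical argument with $\rightsquigarrow$ in place of $\to$ gives the second equality.

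The main obstacle I anticipate is the proof that $R/K$ is a Bear ring. The paper's Bear-ideal condition is formulated element-wise using $a^\ast$ and $a^-$, whereas the Bear property itself is a statement about ideals; bridging these two levels requires an element-by-element descent argument applied across the generators of $L$, and this is where the full strength of the Bear-ideal hypothesis on both $I$ and $J$ (and hence on their intersection $K$) is actually consumed.
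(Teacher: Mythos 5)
Your proposal follows essentially the same route as the paper, whose entire proof is the one-line instruction to apply Proposition \ref{prop6} (the Bear $\Rightarrow$ PBLR-3 direction) and Proposition \ref{prop2'} (lifting PBLR-3 on quotients back to the PBLR-2 identity) in that order. You supply considerably more detail than the paper does --- notably the verification that $R/(I\cap J)$ is again a Bear ring, which is exactly where the Bear-ideal hypothesis is consumed and which the paper leaves implicit --- but the underlying decomposition and the two cited propositions coincide with the paper's argument.
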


\begin{proof}
Straightforward by using Propositions \ref{prop6} and \ref{prop2'} in this order.
\end{proof}
The same result as in the Proposition \ref{prop2''} holds for Bear rings and Bear-ideals.
\begin{proposition}
Every quotient of a Bear ring (by an Bear ring) is a multiplication Bear ring.
\end{proposition}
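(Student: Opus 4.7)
The plan is to mirror the proof of Proposition \ref{prop2''} while carrying the extra Bear-ring structure through the quotient, and to identify the Bear-ideal hypothesis as exactly what is needed for that descent. First I would fix a Bear ring $R$ together with a Bear-ideal $I$ of $R$, and recall that every ideal of $R/I$ has the form $J/I$ for some ideal $J$ of $R$ with $I\subseteq J$.

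For the multiplication-ring part, the argument of Proposition \ref{prop2''} applies almost verbatim: given $J/I\subseteq K/I$ lift to $J\subseteq K$ in $R$, observe that the ambient Bear ring $R$ is itself a multiplication ring (which follows by checking PBLR-1 directly, since in a Bear ring all annihilators are principal idempotent ideals, so $I\cap J=(I\rightarrow J)\cdot I=I\cdot(I\rightsquigarrow J)$ reduces to an idempotent computation combining Proposition \ref{prop6} with Proposition \ref{prop1}), and then push the factorization $J=K\cdot T=T'\cdot K$ down to $R/I$ to conclude $J/I=(K/I)\cdot(T/I)=(T'/I)\cdot(K/I)$.

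For the Bear part, let $J/I$ be an ideal of $R/I$ with $I\subseteq J$. By Lemma \ref{lem5}, $(J/I)^{\ast}=(J\rightarrow I)/I$ and $(J/I)^{-}=(J\rightsquigarrow I)/I$. Since $R$ is a Bear ring, the full annihilators $J^{\ast}$ and $J^{-}$ are of the form $eR$ and $Re'$ for idempotents $e,e'\in R$. I would then invoke the Bear-ideal hypothesis on $I$ to argue that the classes $\bar{e},\bar{e}'\in R/I$ are still idempotent and generate $(J/I)^{\ast}$ and $(J/I)^{-}$ respectively: concretely, if $x\in J\rightarrow I$ then modulo $I$ the element $x$ should be forced into $\bar{e}(R/I)$, the Bear-ideal condition $a-b\in I\Rightarrow a^{\ast}-b^{\ast}\in I$ being exactly the compatibility needed to guarantee that passing between a representative and its annihilator idempotent is well-defined on cosets.

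The main obstacle I expect is precisely this last step: converting the element-level Bear-ideal condition into an ideal-level statement about principal idempotent generators of $(J\rightarrow I)/I$ and $(J\rightsquigarrow I)/I$ in $R/I$, and verifying both inclusions $\bar{e}(R/I)\subseteq(J/I)^{\ast}$ and $(J/I)^{\ast}\subseteq\bar{e}(R/I)$ (and symmetrically for $\bar{e}'$). The multiplication-ring component is routine once Proposition \ref{prop2''} is in place; the genuinely new content of this proposition is that the Bear-ideal condition is precisely what ensures the principality of annihilators survives the quotient.
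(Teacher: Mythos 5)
The first thing to note is that the paper offers no proof of this proposition at all: it is stated bare, immediately after the remark that the same result as Proposition \ref{prop2''} ``holds for Bear rings and Bear-ideals,'' so there is no argument in the text to compare yours against. Judged on its own, your proposal has two genuine gaps. The first is the claim that the ambient Bear ring $R$ is a multiplication ring ``by combining Proposition \ref{prop6} with Proposition \ref{prop1}.'' Proposition \ref{prop6} links the Bear condition to PBLR-3, which via Proposition \ref{prop2'} is a statement about PBLR-2 (prelinearity); Proposition \ref{prop1} characterizes multiplication rings by PBLR-1 (divisibility). Nothing in the paper, and nothing in general, carries you from one to the other: a domain such as $k[x,y]$ is reduced with identity and is trivially a Bear ring (its only annihilator ideals are $\{0\}$ and $R$, generated by the idempotents $0$ and $1$), yet it is not a multiplication ring. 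So the multiplication half of the conclusion cannot be extracted from the Bear hypothesis alone; indeed, since $\{0\}$ is a Bear-ideal, taking $I=\{0\}$ shows the proposition as literally stated would force every Bear ring to be a multiplication ring, so an additional multiplication hypothesis on $R$ has to be assumed, not derived.

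The second gap is the one you flag yourself but never close. By Lemma \ref{lem5}, $(J/I)^{\ast}=(J\rightarrow I)/I$ is a \emph{relative} annihilator, whereas the Bear condition on $R$ only supplies idempotent generators for the absolute annihilators $J^{\ast}$ and $J^{-}$. The element-level Bear-ideal condition ($a-b\in I\Rightarrow a^{\ast}-b^{\ast}\in I$) does not obviously yield the ideal-level inclusion $(J\rightarrow I)\subseteq eR+I$ that you would need to conclude $(J/I)^{\ast}=\bar{e}(R/I)$, and you give no argument for it, describing it instead as the ``main obstacle'' you expect. Since that step is precisely the content of the Bear half of the conclusion, the proof as written establishes neither half of the proposition; what it does correctly is the routine transfer of a factorization $J=K\cdot T=T'\cdot K$ to the quotient, which is just Proposition \ref{prop2''} again.
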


\begin{lemma}\label{lem7}
Every Von Neumann is a multiplication ring.
\end{lemma}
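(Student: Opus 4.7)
My plan is to show that in a Von Neumann ring $R$, every containment of ideals $I \subseteq J$ automatically satisfies $I = J \cdot I = I \cdot J$; then $K = K' = I$ will be the ideals required by Definition \ref{def0.1}. The inclusions $J \cdot I \subseteq I$ and $I \cdot J \subseteq I$ are immediate from $I$ being a two-sided ideal, so the real work is establishing the reverse containments.

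For the reverse inclusions I would invoke the ``von Neumann regular'' identity $x = xyx$ (for every $x \in R$, some $y \in R$), derived from the hypothesis that $R/P$ is a division ring for every prime ideal $P$. Granted this identity, for any $x \in I \subseteq J$ the factorization $x = (xy) \cdot x$ has $xy \in JR \subseteq J$ and $x \in I$, so $x \in J \cdot I$; analogously, $x = x \cdot (yx)$ with $yx \in RJ \subseteq J$ places $x$ in $I \cdot J$. Combined with the trivial direction, this yields $I = J \cdot I = I \cdot J$ and hence that $R$ is a multiplication ring.

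The principal obstacle is justifying the regular identity $x = xyx$ from the stated definition in the non-commutative setting. In the commutative case it is classical: ``$R/P$ a field for every prime'' together with reducedness is equivalent to von Neumann regularity. In general I would argue that, for a fixed $x \in R$, the two-sided ideal generated by the set $\{x - xrx : r \in R\}$ cannot be proper, since otherwise it would be contained in a maximal ideal $M$, in whose division-ring quotient $R/M$ the image $\bar x$ (invertible or zero) must satisfy $\bar x = \bar x \bar r \bar x$ for some $\bar r$, contradicting the inclusion of all those differences in $M$. A delicate point is pulling back a single witness $y$ to $R$ rather than producing one per maximal ideal; this can be handled through a subdirect representation of $R$ inside the product $\prod_P R/P$ of division rings, where the identity holds coordinate-wise. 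If this proves awkward, an alternative route is to bypass the identity entirely and argue prime-by-prime that $I = J \cdot (J \rightsquigarrow I)$, exploiting the fact that nonzero ideals in the division ring $R/P$ are the whole of $R/P$, so the target equality is trivial modulo every prime, and then lifting via the reducedness of $R$ (intersection of primes is zero) to the original ring.
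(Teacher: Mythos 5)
Your strategy hinges on the von Neumann regular identity $x=xyx$, and that is exactly where the argument breaks down: this identity does not follow from the paper's Definition \ref{def5}, which only requires $R$ to be unitary with $R/P$ a division ring for every prime $P$ --- reducedness is not part of the hypothesis. Take $R=\mathbb{Z}/4\mathbb{Z}$: its unique prime ideal is $(2)$, the quotient is the field $\mathbb{F}_2$, so $R$ is a Von Neumann ring in the paper's sense, yet $2\neq 2y2=0$ for every $y$. Accordingly your intermediate claim fails there: with $I=J=(2)$ one has $J\cdot I=(0)\neq I$, so $K=K'=I$ are not the witnesses required by Definition \ref{def0.1}. (The ring is still a multiplication ring --- take $K=K'=R$ --- so the lemma itself is not contradicted, only your route to it.) The maximal-ideal argument you sketch for the identity is also internally flawed: containment of all the elements $x-xrx$ in $M$ says precisely that $\bar x=\bar x\bar r\bar x$ holds in $R/M$ for \emph{every} $\bar r$, which is consistent with (indeed stronger than) the existence of one such $\bar r$, so there is no contradiction to extract. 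The fallback of ``lifting via reducedness'' is unavailable for the same reason: in $\mathbb{Z}/4\mathbb{Z}$ the intersection of the primes is $(2)\neq(0)$.

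For comparison, the paper's proof does not attempt the regular identity at all: it invokes Proposition \ref{prop1} (multiplication ring if and only if PBLR-1) and then verifies PBLR-1 in each quotient $R/P$ by a four-case inspection of the only two ideals $\{0/P\}$ and $R/P$ of the division ring $R/P$ --- essentially your second, prime-by-prime fallback. Note, however, that a descent step from ``PBLR-1 holds in every $R/P$'' back to $R$ itself is still required, and neither your reducedness argument nor the paper's own proof supplies one; if you pursue that route you must address this lifting explicitly rather than assume it.
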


\begin{proof}
Let $I$ and $J$ be two ideals of a non commutative Von Neumann ring $R$ and $P$ a prime ideal of $R$. Since all multiplicative ring have the PBLR-1 property and vice-versa (by Proposition \ref{prop1}), then by Definition \ref{def5} of a non commutative Von Neumann ring, we only have to show that $I/P\cap J/P=(I/P)\cdot((I\rightsquigarrow J)/P)$ and $I/P\cap J/P=((I\rightarrow J)/P)\cdot (I/P)$. Since in the division ring $R/P$ the ideals $I/P$ and $J/P$ are trivials, we just have to check four cases.

\indent(Case 1) If $I/P=J/P=\{0/P\}$, then $I/P\cap J/P=\{O/P\}$ and $(I/P)\cdot((I\rightsquigarrow J)/P)=((I\rightarrow J)/P)\cdot (I/P)=\{0/P\}$. Hence the property PBLR-1 holds.

\indent(Case 2) If $I/P=\{0/P\}$ and $J/P=R/P$, then $I=\{0\}$, which imply $I/P\cap J/P=\{0/P\}$ and $(I/P)\cdot((I\rightsquigarrow J)/P)=((I\rightarrow J)/P)\cdot (I/P)=\{0/P\}$. Hence the property PBLR-1 holds.

\indent(Case 3) If $I/P=R/P$ and $J/P=\{0/P\}$, using the definitions of $(I\rightsquigarrow J)/P$ and $(I\rightarrow J)/P$, we easily prove that $(I/P)\cdot((I\rightsquigarrow J)/P)=((I\rightarrow J)/P)\cdot (I/P)=\{0/P\}$. Hence the property PBLR-1 holds.

\indent(Case 4) If $I/P=R/P$ and $J/P=R/P$, using the definitions of $(I\rightsquigarrow J)/P$ and $(I\rightarrow J)/P$, we easily prove that $(I/P)\cdot((I\rightsquigarrow J)/P)=((I\rightarrow J)/P)\cdot (I/P)=R/P$. Hence the property PBLR-1 holds.

we complete the proof.
\end{proof}

\begin{proposition}
Let $R$ be a unitary Von Neumann ring. Then $R$ is a pseudo BL-ring if and only if it satisfies $I/P=J/P=\{0\}$ implies $(I\rightarrow J)/P=R/P$. For all ideals $I$, $J$ of $R$ and all prime ideals $P$ of $R$.
\end{proposition}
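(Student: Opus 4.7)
The overall plan is to reduce to verifying PBLR-2 by standard machinery, and then establish the equivalence by passing to the division-ring quotients $R/P$.

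First, since $R$ is a unitary Von Neumann ring, Lemma \ref{lem7} shows that $R$ is a multiplication ring, and hence by Proposition \ref{prop1}, PBLR-1 holds automatically. Consequently, $R$ is a pseudo BL-ring if and only if it satisfies PBLR-2. Moreover, $R/P$ is a division ring for every prime $P$, so the only ideals of $R/P$ are $\{0/P\}$ and $R/P$ itself. It therefore suffices to show that PBLR-2 is equivalent to the stated prime-quotient condition; the $\rightsquigarrow$-half of PBLR-2 will follow by a symmetric argument, as in previous proofs in the paper.

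For the forward direction, assume PBLR-2 holds, fix a prime $P$, and take ideals $I, J$ with $I/P = J/P = \{0\}$, i.e., $I \subseteq P$ and $J \subseteq P$. I would pass to the quotient using Lemma \ref{lem5} to interpret the residuation $(I \rightarrow J)/P$ as a residuation in $R/P$. Since the images $\bar I$ and $\bar J$ in $R/P$ are trivial, the residuation $\bar I \rightarrow \bar J$ is vacuously $R/P$, and combining this with the multiplication-ring structure preserved under quotients (Proposition \ref{prop2''}) gives $(I \rightarrow J)/P = R/P$ as required.

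For the converse, assume the stated prime-quotient condition. Let $I, J$ be arbitrary ideals of $R$ and suppose, toward a contradiction, that $(I \rightarrow J) + (J \rightarrow I) \neq R$. Then this sum lies in some maximal ideal $P$, which is prime since $R$ is Von Neumann. Hence $I \rightarrow J \subseteq P$ and $J \rightarrow I \subseteq P$. Using the trivial inclusions $J \subseteq I \rightarrow J$ and $I \subseteq J \rightarrow I$ (from $JI \subseteq J$ and $IJ \subseteq I$), we deduce $I, J \subseteq P$, so $I/P = J/P = \{0\}$. The hypothesis then forces $(I \rightarrow J)/P = R/P$, contradicting $I \rightarrow J \subseteq P$. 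An entirely analogous argument handles the $\rightsquigarrow$-half, so PBLR-2 holds and $R$ is a pseudo BL-ring.

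The main obstacle will be the forward direction: precisely, verifying that the image $((I \rightarrow J)+P)/P$ in the division ring $R/P$ coincides with (not merely sits inside) the quotient residuation $\bar I \rightarrow \bar J$. In general one has only the inclusion $((I\rightarrow J)+P)/P \subseteq \bar I \rightarrow \bar J$, so the delicate point is to invoke the Von Neumann hypothesis (through Lemma \ref{lem7} and the fact that every ideal decomposes well modulo a prime) to close this gap and conclude $(I \rightarrow J)+P = R$.
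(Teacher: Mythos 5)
Your reduction is the right frame: since $R$ is unitary it is generated by idempotents, and Lemma \ref{lem7} together with Proposition \ref{prop1} give PBLR-1 for free, so everything hinges on PBLR-2. Your converse argument for the $\rightarrow$ half of PBLR-2 (put the proper ideal $(I\rightarrow J)+(J\rightarrow I)$ inside a maximal ideal $P$, use the trivial inclusions $J\subseteq I\rightarrow J$ and $I\subseteq J\rightarrow I$ to force $I,J\subseteq P$, then contradict the hypothesis) is correct, and it is already more than the paper supplies: the paper's entire proof is the single sentence that the result ``follows directly from Lemma \ref{lem7}'', which at best yields PBLR-1.

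Two genuine gaps remain, however. First, the forward direction is not proved. You correctly observe that the only free inclusion is $((I\rightarrow J)+P)/P\subseteq \bar{I}\rightarrow\bar{J}=R/P$, which says nothing, and you defer the actual content to ``invoking the Von Neumann hypothesis'' without supplying an argument. Under Definition \ref{def5} no such argument can exist: $\mathbb{Z}_4$ has the unique prime $P=(2)$ with $\mathbb{Z}_4/P$ a field, so it is a unitary Von Neumann ring in the paper's sense, and it is a pseudo BL-ring (its ideals form the chain $(0)\subseteq(2)\subseteq\mathbb{Z}_4$, it is a multiplication ring, and PBLR-2 is automatic on a chain); yet for $I=(2)$ and $J=(0)$ one has $I/P=J/P=\{0\}$ while $I\rightarrow J=(2)^{\ast}=(2)\subseteq P$, so $(I\rightarrow J)/P\neq R/P$. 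Hence the forward implication fails as stated, and closing your ``delicate point'' would require either strengthening ``Von Neumann'' (e.g.\ to reduced, so that $\mathbb{Z}_4$ is excluded) or reinterpreting the displayed condition. Second, in the converse your ``entirely analogous argument'' for the $\rightsquigarrow$ half of PBLR-2 has nothing to run on: the stated hypothesis constrains only $I\rightarrow J$, and a condition on $\rightarrow$ alone does not yield $(I\rightsquigarrow J)+(J\rightsquigarrow I)=R$; one would need the matching assumption $(I\rightsquigarrow J)/P=R/P$ to be added to the statement.
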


\begin{proof}
This result follows directly from the Lemma \ref{lem7}.
\end{proof}

\section{Representation of pseudo-BL algebras}
The purpose of this section is to find a representation of pseudo BL-ring in the sense of subdirectly irreducible algebras. We start this section with the definitions of subdirectly irreducible ring and special primary ring.

\begin{definition}
A ring $R$ is said to be subdirectly irreducible if every subdirect product of $R$ is trivial.
\end{definition}
Equivalently, a ring $R$ is subdirectly irreducible if and only if the intersection of all non-zero ideals of $R$ is non-zero.

\begin{definition}
A ring $R$ is a special primary ring if $R$ has a unique maximal ideal $M$ and if each proper ideal of $R$ is a power of $M$.
\end{definition}

If $P$ is a prime ideal of ring $R$ and if $S = R\setminus P$, we shall denote $0_S^{\ast}$ (respectively $0_S^-$), the $S$-component of the right zero ideal, by $N^{\ast}(P)$ (respectively $N^-(P)$), that is, $N^{\ast}(P) = \{x\in R~|~ xs = 0$  for some  $s\in R\setminus P \}$ (respectively $N^{-}(P) = \{x\in R~|~ sx = 0$ for some $s\in R\setminus P \}$).
\begin{lemma}\label{lem8}
Let $R$ be a ring that is generated by idempotents and $P$ be a prime ideal of $R$. Then, $$\displaystyle{\bigcap_{P}}N^{\ast}(P)=\{0\}~ and~ \displaystyle{\bigcap_{P}}N^{-}(P)=\{0\}.$$
\end{lemma}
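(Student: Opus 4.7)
The plan is to argue by contrapositive: fix an arbitrary nonzero $x\in R$ and exhibit a prime ideal $P$ with $x\notin N^{\ast}(P)$ (and, by a mirror argument, a prime $Q$ with $x\notin N^{-}(Q)$), which will force $\bigcap_P N^{\ast}(P)=\bigcap_P N^{-}(P)=\{0\}$. Since $R$ is generated by idempotents, I first select an idempotent $e\in R$ with $ex=xe=x$; the equality $xe=x\neq 0$ already guarantees that $e$ does not right-annihilate $x$, so in particular $e\neq 0$. This idempotent will serve as the ``witness'' around which the desired prime is constructed.

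The heart of the argument is to produce a prime ideal containing the right annihilator $\mathfrak{r}(x)=\{s\in R\mid xs=0\}$. Because $\mathfrak{r}(x)$ is only a right ideal in general, I pass to the two-sided ideal $\mathfrak{A}(x)$ it generates. Granted $e\notin \mathfrak{A}(x)$, Zorn's lemma applied to the family of two-sided ideals containing $\mathfrak{A}(x)$ and disjoint from $\{e\}$ produces a maximal such ideal $P$. A standard $m$-system argument---$R\setminus P$ is the maximal $m$-system containing $e$ and missing $\mathfrak{A}(x)$---then shows that $P$ is prime. Since $\mathfrak{r}(x)\subseteq \mathfrak{A}(x)\subseteq P$, no $s\in R\setminus P$ can satisfy $xs=0$, and hence $x\notin N^{\ast}(P)$ as required. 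The identity $\bigcap_P N^{-}(P)=\{0\}$ follows by the dual construction, working instead with the left annihilator $\{s\in R\mid sx=0\}$, its two-sided closure, and the symmetric Zorn/$m$-system argument to produce $Q$.

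The main obstacle I anticipate is the verification that $e\notin \mathfrak{A}(x)$. In the commutative setting this is automatic, since $\mathfrak{r}(x)$ is already a two-sided ideal and $xe=x\neq 0$ gives $e\notin \mathfrak{r}(x)=\mathfrak{A}(x)$ directly. In the non-commutative situation the two-sided closure can, a priori, be strictly larger than the right annihilator, and one must exploit the generation-by-idempotents hypothesis more carefully. I expect to do this by assuming a representation $e=\sum r_i s_i t_i$ with $s_i\in \mathfrak{r}(x)$ and multiplying on the left by $x=xe$, then using the compatibility $ex=x$ together with the annihilation relations $xs_i=0$ to force the contradiction $x=0$. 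This is where the structural hypothesis on $R$ must do its essential work; everything else in the argument is a standard application of Zorn's lemma and the $m$-system characterization of primes.
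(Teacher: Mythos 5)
Your overall strategy---fix $x\neq 0$, produce a prime $P$ avoiding enough right-annihilating elements of $x$, and conclude $x\notin N^{\ast}(P)$---is the same as the paper's, but the ideal you build the prime around is different, and that difference is exactly where the argument breaks. The paper works with $(xR)^{\ast}=\{z\mid z(xR)=0\}$ and $(Rx)^{-}=\{z\mid (Rx)z=0\}$ and asks for a prime containing them; you work with the right annihilator $\mathfrak{r}(x)=\{s\mid xs=0\}$ and its two-sided closure $\mathfrak{A}(x)$. The step you flag as the ``main obstacle,'' namely $e\notin\mathfrak{A}(x)$, is not merely unproved: it is false in general, and your anticipated repair cannot work. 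If $e=\sum_i r_i s_i t_i$ with $xs_i=0$, then $x=xe=\sum_i (xr_i)s_i t_i$, and there is no reason for $(xr_i)s_i$ to vanish, because $\mathfrak{r}(x)$ is a right ideal but not a left ideal, so the left multipliers $r_i$ destroy the annihilation relations. Concretely, take $R=M_2(k)$ and $x=E_{11}$: then $\mathfrak{r}(x)$ is the set of matrices with zero first row, a nonzero right ideal, and since $M_2(k)$ is simple its two-sided closure $\mathfrak{A}(x)$ is all of $R$; hence $e=I_2\in\mathfrak{A}(x)$ and no prime contains $\mathfrak{A}(x)$ at all.

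Worse, the same example shows the statement itself fails for genuinely non-commutative rings: $M_2(k)$ is unital, hence generated by idempotents, its unique prime ideal is $\{0\}$, and $E_{11}E_{22}=0$ with $E_{22}\neq 0$ gives $E_{11}\in N^{\ast}(\{0\})=\bigcap_P N^{\ast}(P)\neq\{0\}$. So no argument along your lines (or any other) can close the gap without additional hypotheses. For what it is worth, the paper's own proof stumbles at the corresponding point: it needs a single prime containing $(Rx)^{-}$, which in $M_2(k)$ is again the nonzero, right-but-not-two-sided ideal of matrices with zero first row, and it justifies the existence of such a prime only by citing a proposition from the commutative setting of \cite{HLNN18}. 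The underlying lesson is that the commutative proof (``the annihilator of $x$ is a proper ideal, hence lies in a prime $P$, hence $x\notin N(P)$'') depends essentially on annihilators of elements being two-sided ideals, and that is precisely what is lost here; your instinct that the generation-by-idempotents hypothesis must ``do essential work'' at this point was right, but no amount of work with idempotents can salvage a false claim.
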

\begin{proof}
To show that $\displaystyle{\bigcap_P}N^{\ast}(P)=0$ and $\displaystyle{\bigcap_P}N^{-}(P)=0$, it is sufficient to prove that if $x\neq 0$, then there exists a prime ideal $P$ such that $x\notin N^{\ast}(P)$ and $x\notin N^{-}(P)=0$.

Let $x\neq 0$, then $(xR)^{\ast}\neq R$ and $(Rx)^-\neq R$ (because if we assume that the are all equal, then for all $z\in R$, $z\cdot x=0$ and $x\cdot z=0$ with $x\neq 0$, which mean that for all $z\in R$, $z=0$. So $R=\{0\}$ contradiction). Thus by Proposition [2.10, \cite{HLNN18}] there exists a prime ideal $Q$ of $R$ such that $(xR)^{\ast}\subseteq Q$ and $(Rx)^-\subseteq Q$.

Assume that $x\in N^{\ast}(Q)$ and $x\in N^-(Q)$, then $sx=0$ and $xs=0$ for some $s\notin Q$.

If $s\cdot x=0$, then $(s\cdot x)R=\{0\}$, thus $s\in(xR)^{\ast}\subseteq Q$. Therefore $x\notin N^{\ast}(Q)$.

If $x\cdot s=0$, then $R(x\cdot s)=\{0\}$, thus $s\in(xR)^{-}\subseteq Q$. Therefore $x\notin N^{-}(Q)$.

Hence, $\displaystyle{\bigcap_{P}}N^{\ast}(P)=\displaystyle{\bigcap_{P}}N^{-}(P)=\{0\}$.
\end{proof}

\begin{proposition}\label{prop7}
Every unitary pseudo BL-ring with left and right unit is isomorphic to a subring of a direct product of special primary rings.
\end{proposition}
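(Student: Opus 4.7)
The plan is to construct an injective ring homomorphism $\varphi\colon R \longrightarrow \prod_{P} R/N^{\ast}(P)$, with $P$ ranging over the prime ideals of $R$, and to show that each factor $R/N^{\ast}(P)$ is a special primary ring. The embedding is the diagonal map $\varphi(x) = (x+N^{\ast}(P))_{P}$, whose kernel equals $\bigcap_P N^{\ast}(P)=\{0\}$ by Lemma \ref{lem8} (whose hypothesis that $R$ is generated by idempotents holds automatically for a unitary pseudo BL-ring), so $\varphi$ realises $R$ as a subring of the product.

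Next, fix a prime ideal $P$ and write $\overline{R}_P := R/N^{\ast}(P)$ and $\overline{P} := P/N^{\ast}(P)$. By Corollary \ref{cor2}, $\overline{R}_P$ is again a pseudo BL-ring, hence by Proposition \ref{prop1} a multiplication ring. By Proposition \ref{prop4}, $P$ is maximal in $R$, so $\overline{P}$ is a maximal ideal of $\overline{R}_P$. To see that $\overline{P}$ is the \emph{unique} maximal ideal, I would show that each $s\in R\setminus P$ becomes a unit in $\overline{R}_P$: if the class of $s$ lay in some other maximal ideal $\overline{M}$ of $\overline{R}_P$, then its preimage $M$ would be a maximal (hence by Proposition \ref{prop4} prime) ideal of $R$ distinct from $P$; the multiplication ring property applied to $M\cap P$ together with the definition of $N^{\ast}(P)$ then produces a contradiction with $s\notin P$.

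The main obstacle is the third step: showing that every proper ideal $I$ of $\overline{R}_P$ is a power of $\overline{P}$. Since $\overline{R}_P$ is a multiplication ring and $I\subseteq \overline{P}$, Proposition \ref{prop1} yields an ideal $K_1$ with $I = \overline{P}\cdot K_1$; iterating gives a descending sequence of ideals $K_n$ satisfying $I = \overline{P}^{\,n}\cdot K_n$ for all $n$. The heart of the argument is to show that this chain must stabilise with $K_n = \overline{R}_P$ for some $n$, which amounts to a non-commutative local version of Krull's intersection theorem: the passage to the quotient modulo $N^{\ast}(P)$ localises the prime $P$ and should force $\bigcap_n \overline{P}^{\,n} = \{0\}$ in $\overline{R}_P$. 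Carrying this termination argument through — handling the left/right asymmetry between $N^{\ast}(P)$ and $N^{-}(P)$ and the non-commutativity of the multiplication — is the principal technical difficulty; once established, each $\overline{R}_P$ is a special primary ring and $\varphi$ exhibits $R$ as a subring of a direct product of such rings.
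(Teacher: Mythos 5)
Your embedding step coincides with the paper's: both map $R$ diagonally into a product indexed by the prime ideals, and both kill the kernel via Lemma \ref{lem8}, so that part is fine (modulo the shared, unexamined issue of whether $N^{\ast}(P)$ is actually a two-sided ideal in a non-commutative ring, and the left/right asymmetry between $N^{\ast}(P)$ and $N^{-}(P)$, which the paper sidesteps by producing two separate embeddings $f$ and $g$). The divergence is in the second half, and that is where your argument has a genuine gap: you reduce the claim ``every proper ideal of $\overline{R}_P$ is a power of $\overline{P}$'' to a non-commutative Krull-intersection statement $\bigcap_n \overline{P}^{\,n}=\{0\}$, and then explicitly leave that statement unproved. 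Nothing you have established earlier (PBLR-1, PBLR-2, maximality of primes) yields it; it is exactly the hard content of the proposition, so as written the proof is incomplete.

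The paper does not prove this step either: it closes the argument by citing the structure theory of multiplication rings in Larsen--McCarthy (Theorem 9.23 and Propositions 9.25--9.26 of \cite{LM71}), which assert that the localization of a multiplication ring at a prime is a field, a special primary ring, or a Noetherian (discrete) valuation ring --- precisely the termination fact you are missing, packaged as a citation. So the fix available to you is to invoke those results rather than reprove them; but you should be aware of two caveats that your more hands-on attempt actually exposes. First, the cited results are theorems about \emph{commutative} rings, and neither you nor the paper supplies a non-commutative analogue, so the ``principal technical difficulty'' you flag is real and is not discharged anywhere. Second, the trichotomy in Theorem 9.23 means the factors need not all be special primary rings (some may be discrete valuation rings), which is why the paper needs Proposition \ref{prop8} as a companion statement; your argument, if completed, would face the same case split.
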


\begin{proof}
Let $R$ be a pseudo BL-ring with left and right unit, and $P$ a prime ideal of $R$. Then by Proposition \ref{prop1} it is know that $R$ is a multiplicative ring.

Let consider the mappings $f:R\rightarrow \displaystyle{\prod_P}R/P$ such that $f(x)=(\frac{x}{1_R})/P$ a sequence on the prime ideals of $R$ and $g:R\rightarrow \displaystyle{\prod_P}R/P$ such that $g(x)=(\frac{x}{1_L})/P$ a sequence on the prime ideals of $R$. We easily check that $f$ and $g$ are ring homomorphisms. Using the previous Lemma \ref{lem8}, the kernels of $f$ and $g$ are define to be $\ker(f)=\displaystyle{\bigcap_P}N^{\ast}(P)=\{O\}$ and $\ker(g)=\displaystyle{\bigcap_PN^-(P)}=\{0\}$. Hence $R$ is isomorphic to $f(R)$ and $g(R)$ two subring of $\displaystyle{\prod_P}R/P$. Since $R$ is a multiplicative ring, then by Theorem 9.23 and Propositions 9.25 and 9.26 in \cite{LM71}, each $R/P$ is a special primary ring.
\end{proof}

\begin{proposition}\label{prop8}
Every unitary pseudo BL-ring with left and right unit is isomorphic to a subring of a direct product of discrete valuation rings.
\end{proposition}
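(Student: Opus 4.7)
The plan is to imitate the proof of Proposition~\ref{prop7}, constructing the same embeddings $f,g : R \to \prod_P R/P$ indexed over the prime ideals $P$ of $R$, but refining the last step so that each factor $R/P$ is identified as a discrete valuation ring rather than merely as a special primary ring. The backbone remains Lemma~\ref{lem8}, which forces $\bigcap_P N^{\ast}(P) = \bigcap_P N^{-}(P) = \{0\}$ and hence makes both $f$ and $g$ injective.

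First, I would recall that by Proposition~\ref{prop1} the pseudo BL-ring $R$ is a multiplication ring, and by Proposition~\ref{prop2''} every quotient $R/P$ is a multiplication ring as well. Next, by Proposition~\ref{prop4}, each prime ideal $P$ of $R$ is maximal, so each $R/P$ is a multiplication ring in which the zero ideal is prime, that is, a prime multiplication ring. Then I would define the ring homomorphisms $f(x) = ((x/1_R)/P)_P$ and $g(x) = ((x/1_L)/P)_P$ into $\prod_P R/P$ exactly as in Proposition~\ref{prop7}; by Lemma~\ref{lem8} both are injective, so $R \cong f(R) \subseteq \prod_P R/P$.

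To conclude, I would invoke the structure theory for multiplication rings of Larsen--McCarthy \cite{LM71}: when $R/P$ is a prime multiplication ring, the classification (Theorem~9.23 together with Propositions~9.25 and~9.26) refines the ``special primary'' conclusion used in Proposition~\ref{prop7} to the sharper statement that $R/P$ is in fact a discrete valuation ring, since the unique non-zero prime is then maximal and principal and $R/P$ is a domain. The main obstacle I anticipate is precisely this last identification: pinning down that the prime multiplication ring $R/P$ is genuinely a DVR rather than merely an Artinian local quotient of one. The hypothesis of left and right units, combined with primeness of $\{0\}$ in $R/P$, should rule out nilpotent maximal ideal behaviour and isolate the DVR alternative; the care required to carry this out in the non-commutative pseudo BL-setting --- where left and right annihilators, and hence the two arrows $\rightarrow$ and $\rightsquigarrow$, must be tracked separately --- is the main technical point.
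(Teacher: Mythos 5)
Your proposal follows essentially the same route as the paper, whose own proof of this statement simply says to repeat the argument of Proposition~\ref{prop7} (the embeddings $f,g$ into $\prod_P R/P$ with injectivity from Lemma~\ref{lem8}) and then appeal to the Larsen--McCarthy classification to identify each factor as a discrete valuation ring instead of a special primary ring. In fact you supply more detail than the paper does, correctly flagging the identification of $R/P$ as a DVR as the step carrying the real content.
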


\begin{proof}
We use the same idea as in Proposition \ref{prop7} and \cite{LM71}.
\end{proof}

\begin{proposition}
Let $R$ be a subdirectly irreducible pseudo BL-ring with minimal ideal $M$. Then,

(1) $M$ is an annihilator ideal.

(2) The annihilator ideals of $R$ are linearly ordered and finite in number.
\end{proposition}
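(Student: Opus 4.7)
My plan is to reduce the problem to showing that $R$ itself is (isomorphic to) a single special primary ring and then read off both statements from the chain structure of its ideal lattice. First I would invoke Proposition~\ref{prop7}, which embeds $R$ subdirectly in $\prod_{P} R/P$, with each $R/P$ a special primary ring and the kernel of the $P$-projection equal to $P$. Since $R$ is subdirectly irreducible with monolith $M$, every nonzero ideal of $R$ contains $M$; if every $P$ were nonzero, each $P$ would contain $M$, contradicting $\bigcap_{P} P = 0$ (the kernel of the subdirect embedding). Hence some prime $P_{0}$ equals $0$, so $R \cong R/P_{0}$ is itself a special primary ring with a unique maximal ideal $\mathfrak{m}$; by definition every proper ideal of $R$ is some power $\mathfrak{m}^{k}$.

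Next I would verify that $\mathfrak{m}^{n} = 0$ for a least $n$, so that the ideal lattice of $R$ is the finite chain
\[
R \supsetneq \mathfrak{m} \supsetneq \mathfrak{m}^{2} \supsetneq \cdots \supsetneq \mathfrak{m}^{n-1} \supsetneq 0.
\]
If no such $n$ existed, the $\mathfrak{m}^{k}$ would all be nonzero and, by the multiplication-ring structure of Proposition~\ref{prop1} used in a Nakayama-style argument to exclude any equality $\mathfrak{m}^{k} = \mathfrak{m}\cdot\mathfrak{m}^{k} = \mathfrak{m}^{k+1}$, strictly decreasing; but then $\bigcap_{k}\mathfrak{m}^{k}$ would be an ideal strictly below every $\mathfrak{m}^{k}$, forcing it to equal $0$, which contradicts subdirect irreducibility. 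Part~(2) then follows at once: annihilator ideals are ideals of $R$ and hence form a linearly ordered subset of this finite chain.

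For part~(1), the monolith is $M = \mathfrak{m}^{n-1}$, and I would show $M = \mathfrak{m}^{\ast} = \mathfrak{m}^{-}$. The inclusion $M \subseteq \mathfrak{m}^{\ast}$ is just the equation $\mathfrak{m}^{n-1}\mathfrak{m} = \mathfrak{m}^{n} = 0$; conversely $\mathfrak{m}^{\ast}$ is a proper ideal because $R \not\subseteq \mathfrak{m}^{\ast}$ (generation of $R$ by idempotents forces $R\mathfrak{m} \neq 0$), so $\mathfrak{m}^{\ast}$ lies on the chain, $\mathfrak{m}^{\ast} = \mathfrak{m}^{k}$ for some $k$, and the requirement $\mathfrak{m}^{k+1} = 0$ forces $k = n-1$. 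The symmetric argument yields $\mathfrak{m}^{-} = M$, so that $M$ is an annihilator ideal in the sense of the introduction.

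The main obstacle is the finiteness step: establishing that the descending chain $\{\mathfrak{m}^{k}\}$ terminates at $0$. This blends subdirect irreducibility (to keep the intersection from collapsing prematurely) with a Nakayama-style consequence of the multiplication-ring hypothesis (to rule out stabilization at a nonzero ideal). Once that is in hand, both conclusions follow by routine bookkeeping on the finite ideal chain.
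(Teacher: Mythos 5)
Your plan stands or falls on the reduction ``$R$ is subdirectly irreducible, hence $R$ is itself a special primary ring whose ideals form a finite chain,'' and that reduction does not go through. First, Proposition~\ref{prop7} assumes $R$ is unitary with left and right unit, which the present proposition does not. Second, and more seriously, the kernels of the component maps in Proposition~\ref{prop7} are not the primes $P$ but the sets $N^{\ast}(P)$ of Lemma~\ref{lem8}; what vanishes is $\bigcap_P N^{\ast}(P)$, not $\bigcap_P P$. The latter is the prime radical, and it is typically nonzero here: $\mathbb{Z}/p^2\mathbb{Z}$ is a subdirectly irreducible (pseudo) BL-ring in which no prime ideal is zero, so your inference ``some prime $P_0$ equals $0$'' is false. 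Even after replacing $P$ by $N^{\ast}(P)$, subdirect irreducibility only tells you that one component map $R\to R_{P_0}$ is injective; Proposition~\ref{prop7} produces an embedding into a product, not a subdirect representation with surjective projections, so you may conclude that $R$ embeds in a special primary ring, not that it is one. Consequently the finite chain $R\supsetneq\mathfrak{m}\supsetneq\cdots\supsetneq 0$ on which all your subsequent bookkeeping rests is not available.

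There is also a structural warning sign: your argument, if it worked, would show that \emph{every} ideal of a subdirectly irreducible pseudo BL-ring lies on a finite chain, which is strictly stronger than the stated claim and is inconsistent with the intended picture in Theorem~\ref{thm1}, where $A(R_x)$ decomposes as an ordinal sum $AN^{\ast}(R_x)\oplus D^{\ast}(R_x)$ with a possibly nontrivial block of dense ideals; only the annihilator ideals are expected to form a chain. The paper's proof avoids all of this: part (1) is obtained directly from the minimality of $M$ (one shows $M\subseteq M^{\ast}$ and $M^{\ast}\neq R$, whence $M=M^{\ast}$, and symmetrically $M=M^{-}$), and part (2) works inside the pseudo MV-center $AN^{\ast}(R)$: every proper annihilator ideal is contained in $M^{\ast}$, so two incomparable annihilator ideals $X,Y$ would give $(X\rightarrow Y)+(Y\rightarrow X)\subseteq M^{\ast}\neq R$, contradicting prelinearity. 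You should redo the argument along those lines, working only with annihilator ideals rather than trying to control the whole ideal lattice.
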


\begin{proof}
(1) We know by definition of $M^{\ast}$ that it is an ideal of $R.$ Since $M$ is the minimal ideal of $R,$ then $M\subseteq M^{\ast},$ and because $M\neq\{0\},$ then $M^{\ast}\neq R.$ Therefore, by the maximality of $M,$ we obtain that $M=M^{\ast};$
similarly, we show that in the case of right ideal $M^{-}$ of $R,$ we also have $M=M^{-};$\newline
therefore $M$ is an annihilator ideal.

(2) Let $R$ be a pseudo BL-ring, then $A(R)$ is a pseudo BL-algebra; are pseudo MV-algebra. Consequently $AN^{\ast}(R)$ and $AN^-(R)$ are respectively the left pseudo $MV-$ center (and right pseudo $MV$-center) of $A(R)$. Moreover, for all ideal $I$ of $R$, we have $(\bigvee I)^{\ast} = \bigcap I^{\ast}$ and $(\bigvee I)^{-} = \bigcap I^{-}$ which imply that every subset of $AN^{\ast}(R)$ and $AN^{-}(R)$ have an infimum. Thus every subset of $AN^{\ast}(R)$ and $AN^{-}(R)$ also have a supremum because if $S$ is a subset of $AN^{\ast}(R)$ (or $AN^{-}(R)$), then $\bigwedge S = (\bigvee S^{\ast})^{\ast}$ (or $\bigwedge S = (\bigvee S^{-})^{-}$). Hence $AN^{\ast}(R)$ and $AN^{-}(R)$ are complete pseudo $MV-$algebras. Also, for every non zero ideal $I$ of $R$, $M\subseteq I$ implies that $I^{\ast}\subseteq M^{\ast}$ and $I^{-}\subseteq M^{-}$.

Therefore, for every proper ideals $J$ in $AN^{\ast}(R)$ and $K$ in $AN^{-}(R)$, we have $J^{\ast}\subseteq M^{\ast}$ and $K^{-}\subseteq M^{-}$ this induce that $AN^{\ast}(R)\setminus\lbrace R\rbrace$ and $AN^{-}(R)\setminus\lbrace R\rbrace$ have both of them a maximum element, namely $M^{\ast}\neq R$ and $M^{-}\neq R$ because $M\neq \{0\}$.

Let $X,Y\in AN^{\ast}(R)$ such that $X$ is not include in $Y$ and $Y$ is not include in $X$, then $X\rightarrow Y, Y\rightarrow X\neq R$ and $X\rightarrow Y, Y\rightarrow X\subseteq M^{\ast};$ hence, by pre-linearity axiom, $R = X\rightarrow Y\vee Y\rightarrow X \subseteq M^{\ast};$ thus $M^{\ast} = R,$ which is a contradiction, so $(AN^{\ast}(R),\subseteq)$ is a pseudo $MV-$chain.\newline
By the same way we prove that $(AN^{-}(R),\subseteq)$ is a pseudo $MV-$chain.
\end{proof}
The next result is the Representation Theorem for pseudo BL-ring.
\begin{theorem}\label{thm1}
Every pseudo BL-ring $R$ is a subdirect product of a family $\{R_x| x\in R\setminus \{0\}\}$ of subdirectly irreductible pseudo BL-rings satisfying,

(1) $A(R_x)$ is isomorphic to $AN^{\ast}(R_x)\oplus D^{\ast}(R_x)$ and to $AN^{-}(R_x)\oplus D^{-}(R_x)$ for all $x\neq0$.

(2) Every ideal of each $R_x$ is either an annihilator ideal or dense.

(3) $A(R)$ is a subdirect product of $\{A(R_x)| x\in R\setminus\{0\}\}$.

(4) $A(R_x)$ is a pseudo BL-algebra with a unique atom.
\end{theorem}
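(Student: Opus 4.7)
The plan is to construct the family $\{R_x\}_{x\in R\setminus\{0\}}$ via a standard Zorn-type subdirect decomposition and then read off the four consequences from results already established in the paper. First, for each nonzero $x\in R$, I would apply Zorn's lemma to the poset of ideals of $R$ that do not contain $x$ to extract an ideal $P_x$ maximal with this property. A routine argument shows $P_x$ is prime, and the image $\bar{x}$ in $R_x := R/P_x$ generates an ideal $M_x$ contained in every nonzero ideal of $R_x$; hence $R_x$ is subdirectly irreducible with minimum $M_x$. Corollary \ref{cor2} ensures that each $R_x$ is itself a pseudo BL-ring, and because $\bigcap_{x\neq 0} P_x=\{0\}$, the canonical homomorphism $R\to\prod_{x\neq 0} R_x$ is a subdirect embedding.

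Two of the four claims then fall out almost immediately. For $(3)$, I would apply the functor $A(-)$ to the subdirect embedding and invoke Lemma \ref{lem5}, which guarantees that the operations $\cap$, $+$, $\rightarrow$, $\rightsquigarrow$ on $A(R)$ descend compatibly to $A(R_x)$; together these yield $A(R)$ as a subdirect product of the $A(R_x)$. For $(4)$, the ideal $M_x$ is by construction the unique atom of the lattice $A(R_x)$, and Proposition \ref{prop2} identifies $A(R_x)$ as a pseudo BL-algebra.

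The substance lies in $(2)$ and $(1)$. For $(2)$, I would take a nonzero ideal $I$ of $R_x$ and argue by dichotomy: if $I^{\ast}=I^{-}=\{0\}$, then $I$ is dense; otherwise the atom $M_x$ must be contained in $I^{\ast}$ or in $I^{-}$, and combining PBLR-2 with the linear order on $AN^{\ast}(R_x)$ and $AN^{-}(R_x)$ from the preceding proposition should force $I = I^{\ast\ast} = I^{--}$, exhibiting $I$ as an annihilator ideal. Once $(2)$ is in hand, $(1)$ follows by writing each ideal as $I = (I\cap I^{\ast\ast}) + (I\cap D^{\ast}(R_x))$ and applying PBLR-2 to the annihilator/dense pair, whose intersection is $\{0\}$ by the definitions of $D^{\ast}(R_x)$ and $AN^{\ast}(R_x)$. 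The same argument with the $(^-)$ operation yields the second isomorphism in $(1)$.

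The main obstacle I anticipate is $(2)$: making the dichotomy rigorous requires leveraging the chain structure of annihilator ideals and the identity $M_x = M_x^{\ast} = M_x^{-}$ simultaneously, while keeping the left ($\rightsquigarrow$, $-$) and right ($\rightarrow$, $\ast$) arguments symmetric throughout. Once that is settled, the direct-sum decompositions in $(1)$ are essentially a bookkeeping exercise on top of PBLR-1 and PBLR-2, and the assembly of the subdirect product from the $R_x$ is formal.
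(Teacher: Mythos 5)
Your construction of the family $\{R_x\}$ is exactly the paper's: Zorn's lemma applied to the ideals not containing $x$ yields $K_x$ maximal with that property, $\bigcap_{x\neq 0}K_x=\{0\}$ gives the subdirect embedding, and Corollary \ref{cor2} (or Proposition \ref{prop3'}) makes each quotient a subdirectly irreducible pseudo BL-ring; at that point the paper simply defers the verification of (1)--(4) to Theorem 4.2 of \cite{HLNN18}, whereas you attempt to sketch them. One small correction on the construction: your parenthetical claim that $P_x$ is prime is neither needed nor true in general --- an ideal maximal with respect to excluding a single element need not be prime (that conclusion requires maximality with respect to missing a multiplicatively closed set). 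Subdirect irreducibility of $R/P_x$ only uses that the ideal generated by $\bar{x}$ is the minimum nonzero ideal, so nothing downstream breaks, but the assertion should be dropped.

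The genuine gap is in your treatment of (1). The symbol $\oplus$ in the statement is the \emph{ordinal sum} of hoops --- this is explicit in the proof of the corollary that follows, which appeals to ``the definition of the implication in the ordinal sum of hoops'' --- i.e., the standard decomposition of a (pseudo) BL-algebra into its pseudo MV-center sitting below its set of dense elements. It is not an internal direct-sum decomposition, and your formula $I=(I\cap I^{\ast\ast})+(I\cap D^{\ast}(R_x))$ is not even well formed: $D^{\ast}(R_x)$ is a \emph{set of ideals} of $R_x$, not an ideal, so $I\cap D^{\ast}(R_x)$ has no meaning. Once (2) is proved, what (1) actually requires is checking that every annihilator ideal lies below every dense ideal and that $\cdot$, $\rightarrow$, $\rightsquigarrow$ restrict correctly to the two blocks, which is a different (and easier) verification than the one you describe. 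Your argument for (2) also stops short at ``should force $I=I^{\ast\ast}=I^{--}$'': since an annihilator ideal must simultaneously have the form $J^{\ast}$ and $K^{-}$, the case where only one of $I^{\ast}$, $I^{-}$ is nonzero must be ruled out or handled, and this is precisely where the chain structure of $AN^{\ast}(R_x)$ and $AN^{-}(R_x)$ and the pseudo MV-identities have to be invoked in detail rather than gestured at. Your sketches of (3) and (4) are in line with the intended argument, though for (3) the collective injectivity of $J\mapsto\bigl((J+K_x)/K_x\bigr)_x$ on ideals does not follow merely from $\bigcap_x K_x=\{0\}$ and deserves a line of its own.
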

\begin{proof}
Let $R$ be a pseudo BL-ring, it is well known that $R$ is a subdirect product of subdirectly irreductible homomorphic images of $R$. Using the Zorn's lemma, for all $x\in R$ and $x\neq 0$, there is an maximal ideal $K_x$ that do not contain $x$, which implies that $\displaystyle{\bigcap_x}K_x=\{0\}$. Hence each factor $R/K_x$ is subdirectly irreductible and $R$ is the product of the family $\{R/K_x~|~x\in R\setminus \{0\}\}$. Since $R/K_x$ is a pseudo BL-ring by Corollary \ref{cor2} and Proposition \ref{prop3'}. Let pick up the set $\{R_x~|~x\in R\setminus \{0\}\}$ to be $\{R/K_x~|~x\in R\setminus \{0\}\}$. Then the rest of the proof is similar to Theorem 4.2 in \cite{HLNN18}.
\end{proof}
The following corollary is the same as Corollary 4.3 in \cite{HLNN18}.
\begin{corollary}
Let $R$ be a subdirectly irreductible BL-ring. Then

(1) For every annihilator ideal $I\neq R$ and every dense ideal $J$ with $I\subseteq J$, then $J\rightarrow I=I$ and $J\rightsquigarrow I=I$.

(2) For every ideals $I$, $J$ of $R$, either ($I\rightarrow J$ and $I\rightsquigarrow J$) or ($J\rightarrow I$ and $J\rightsquigarrow I$) are denses.
\end{corollary}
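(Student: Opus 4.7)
The plan is to invoke the representation from Theorem \ref{thm1}. Since $R$ is subdirectly irreducible with minimal ideal $M$, Theorem \ref{thm1}(2) gives the crucial dichotomy that every ideal of $R$ is either an annihilator ideal or a dense ideal, and Theorem \ref{thm1}(1) supplies the decompositions $A(R) \cong AN^{\ast}(R) \oplus D^{\ast}(R)$ and $A(R) \cong AN^{-}(R) \oplus D^{-}(R)$. Combined with the earlier fact that $M$ is an annihilator ideal whose annihilators $M^{\ast}, M^{-}$ are proper in $R$ (for if $M^{\ast} = R$ then $MR = \{0\}$, contradicting that $R$ is generated by idempotents and $M \neq \{0\}$), this yields a uniform bound: every annihilator ideal $K \neq R$ satisfies $K \subseteq M^{\ast} \cap M^{-}$. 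Indeed, writing $K = L^{\ast} = L'^{-}$ with $L, L' \neq \{0\}$, one has $L, L' \supseteq M$ and hence $L^{\ast} \subseteq M^{\ast}$ and $L'^{-} \subseteq M^{-}$.

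For part (1), PBLR-1 applied to $I \subseteq J$ gives $I = I \cap J = (J \rightarrow I) \cdot J = J \cdot (J \rightsquigarrow I)$, and the inclusions $I \subseteq J \rightarrow I$ and $I \subseteq J \rightsquigarrow I$ are automatic. To obtain the reverse inclusions I would read off the ordinal-sum-type behaviour of $AN^{\ast}(R) \oplus D^{\ast}(R)$: for an element $j$ in the upper (dense) part and an element $i$ in the lower (annihilator) part with $i \subseteq j$, one has $j \rightarrow i = i$, which is the defining feature of ordinal sums of residuated lattices. The symmetric decomposition $AN^{-}(R) \oplus D^{-}(R)$ delivers the same equality for $\rightsquigarrow$.

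For part (2), I would first dispose of the trivial case $I \subseteq J$ (or $J \subseteq I$), in which the pair on the appropriate side collapses to $R$ and hence is dense. Otherwise $I \rightarrow J, J \rightarrow I, I \rightsquigarrow J, J \rightsquigarrow I$ are all proper in $R$, and PBLR-2 gives $(I \rightarrow J) + (J \rightarrow I) = R$. Were both $I \rightarrow J$ and $J \rightarrow I$ non-dense, the dichotomy would force them to be annihilators, hence both contained in $M^{\ast}$, placing their sum inside $M^{\ast} \neq R$, a contradiction. Hence at least one of them is dense, and the parallel argument with $\rightsquigarrow$ yields at least one dense among $I \rightsquigarrow J$ and $J \rightsquigarrow I$.

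The main obstacle I expect is the ``pairing'' step: showing that the two dense implications end up attached to the same side, so that either $(I \rightarrow J, I \rightsquigarrow J)$ or $(J \rightarrow I, J \rightsquigarrow I)$ is the dense pair. For this I would exploit the compatibility of the two decompositions of $A(R)$ through the bijections $^{\ast}$ and $^{-}$, together with the fact, proven just before Theorem \ref{thm1}, that $(AN^{\ast}(R), \subseteq)$ and $(AN^{-}(R), \subseteq)$ are pseudo $MV$-chains, hence totally ordered. In such a chain structure, the non-comparability of $I$ and $J$ determines a unique direction in which the implication must leave the annihilator part, and this direction is the same for $\rightarrow$ and $\rightsquigarrow$; this aligns the two pairs and closes the proof along the lines of Corollary 4.3 of \cite{HLNN18}.
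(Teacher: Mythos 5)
Your part (1) follows the paper exactly: both you and the authors read the equality $J\rightarrow I=I$ off the ordinal-sum decomposition $A(R)\cong AN^{\ast}(R)\oplus D^{\ast}(R)$ (and $AN^{-}(R)\oplus D^{-}(R)$ for $\rightsquigarrow$), where an implication from an element of the upper (dense) summand into an element of the lower (annihilator) summand returns the lower element. For part (2) you take a genuinely different route for the ``at least one dense per arrow'' step: you combine PBLR-2, the dichotomy of Theorem~\ref{thm1}(2), and the uniform bound that every proper annihilator ideal is contained in $M^{\ast}\cap M^{-}$ (which you justify correctly via minimality of $M$), so that two non-dense implications would sum inside $M^{\ast}\neq R$. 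That is a clean, elementary argument. The paper instead passes to the quotients $A(R)/D^{\ast}(R)\cong AN^{\ast}(R)$ and $A(R)/D^{-}(R)\cong AN^{-}(R)$, which are chains; there, $[I]\leq[J]$ or $[J]\leq[I]$, and since in any pseudo BL-algebra $x\rightarrow y=1$ iff $x\leq y$ iff $x\rightsquigarrow y=1$, a single comparison in the quotient forces \emph{both} $I\rightarrow J$ and $I\rightsquigarrow J$ into the dense filter simultaneously. That is precisely what buys the paper the pairing for free.

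The pairing step is where your proposal has a real gap, and you are right to flag it. As written, your argument only yields one dense implication among $\{I\rightarrow J, J\rightarrow I\}$ and one among $\{I\rightsquigarrow J, J\rightsquigarrow I\}$, possibly on opposite sides, and your proposed fix (``non-comparability determines a unique direction, the same for both arrows'') is not an argument. A concrete way to close it with the tools you already invoked: by the dichotomy, if both $I$ and $J$ are dense then all four implications contain a dense ideal and are dense; if both are annihilator ideals they lie in the chain $AN^{\ast}(R)=AN^{-}(R)$ and are comparable, so one side gives $R$ for both arrows; and if exactly one is dense, the ordinal-sum structure of Theorem~\ref{thm1}(1) places the annihilator ideal below the dense one, again giving $R$ on one side for both arrows. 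Alternatively, adopt the paper's quotient argument, which handles $\rightarrow$ and $\rightsquigarrow$ in one stroke. Either repair is needed before the proof is complete.
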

\begin{proof}
Let $R$ be a subdirectly irreductible BL-ring. As established in the proof of Theorem \ref{thm1}, $A(R)$ is isomorphic to  $AN^{\ast}(R)\oplus D^{\ast}(R)$ and to $AN^{-}(R)\oplus D^{-}(R)$.

(1) By the definition of the implication in the ordinal sum of hoops, the condition stated hold.

(2) Since $A(R)$ is a pseudo BL-algebra, using the same way as in \cite{BM11} and \cite{DGI02}, we prove that the sets $D^{\ast}(M)$ and $D^-(M)$ of dense elements of any pseudo BL-algebra $M$ is an implicative filter, $M/D^{\ast}(R)$ is isomorphic to $AN^{\ast}(M)$ and $M/D^{-}(R)$ is isomorphic to $AN^{-}(M)$.

Therefore, $A(R)/D^{\ast}(R)$ is isomorphic to $AN^{\ast}(R)$, and $A(R)/D^{-}(R)$ is isomorphic to $AN^{-}(R)$. Futhermore $A(R)/D^{\ast}(R)$ and $A(R)/D^{-}(R)$ are linearly ordered because $AN^{\ast}(R)$ and $AN^{-}(R)$ are linearly ordered. Now using the definitions of order, $\rightsquigarrow$ and  $\rightarrow$ on $A(R)/D^{\ast}(R)$ and $A(R)/D^{-}(R)$ we conclude the proof.
\end{proof}

\section{Conclusion}
It is a well-known result that a $t$-norm has a residuum if and only if the $t$-norm is left-continuous. Therefore, this shows that Basic Logic is not the most general $t$-norm-based logic. In fact, a logic weaker than Basic Logic, called Monoidal $t$-norm-based logic (MTL for short), was defined by Esteva and Godo in \cite{EG01} and  proved to be the logic of left-continuous $t$-norms and their residua. Our research in that way will consist to study commutative rings and non commutative rings $R$ for which $A(R)$ is a MTL-algebra and pseudo MTL-algebra.

\section*{Acknowledgments} This work is supported by the Foundations of Chongqing Municipal Key Laboratory of Institutions of Higher Education ([2017]3), Chongqing Development and Reform Commission (2017[1007]), and Chongqing Three Gorges University.


\begin{thebibliography}{10}

\bibitem{BD09} L. P. Belluce and A. Di Nola, \emph{Commutative rings whose ideals form an MV-algebra}, Math. Log. Quart., \noindent \textbf{ 55}, 2009, 468-486.

\bibitem{BM11} M. Busaniche and F. Montagna, \emph{H\'{a}jek's logic BL and BL-algebras}, In Handbook of Mathematical Fuzzy Logic, \noindent \textbf{1}, 2011.

\bibitem{DGI02} A. Di Nola, G. Georgescu, and A. Iorgulescu, \emph{Pseudo-BL algebras: part I, part II}, Multiple Valued Logic, \noindent \textbf{8}, 2002, 673-714 and 717-750.

\bibitem{EG01} F. Esteva and L. Godo, \emph{Monoidal $t$-norm based logic:towards a logic for left-continuous $t$-norms}, Fuzzy Sets Syst., \noindent \textbf{124}, 2001, 271-288.

\bibitem{GI99} G. Georgescu and A. Iorgulescu, \emph{Pseudo-MV algebras: a non-commutative extension of MV-algebras}, Information Technology, \noindent 1999, 961-968.

\bibitem{H98} P. H\'{a}jek, \emph{Metamathematics of fuzzy logic}, Kluwer Acad. Publ., Dordrecht, 1998.

\bibitem{KLN16} A. Kadji, C. Lele and J.B.Nganou, \emph{A non-commutative generalization of {\L}ukasiewicz rings}, Journal of Applied Logic \noindent \textbf{16}, 2016, 1-13.

\bibitem{HLNN18} O.A. Heubo-Kwegna, C. LELE, S. Ndjeya and J.B. Nganou, \emph{BL-rings}, Logic Journal of the IGPL \noindent \textbf{26}, 2018, 290-299.

\bibitem{L91} T.Y. Lam, \emph{A First Course in Non-commutative Rings}, Springer-Verlag, New York, 1991.

\bibitem{LM71} M.D. Larsen and P.J. McCarthy, \emph{Multiplicative theory of ideals}, Academic Press, New York and London, 1971.

\bibitem{U78} T. Ukegawa, \emph{Some Property of Non-Commutative Multiplication Rings}, Proc. Japan Acad., \noindent \textbf{54}, 1978, 279-284.

\end{thebibliography}
\end{document}